\title[Large cusps and short geodesics]{Finiteness
results for flat surfaces: large cusps and short geodesics}
\author{John Smillie
}
\address{Cornell University, Ithaca, NY {\tt smillie@math.cornell.edu}}
\author{Barak Weiss}
\address{Ben Gurion University, Be'er Sheva, Israel 84105
{\tt barakw@math.bgu.ac.il}}
\font\sb = cmbx8 scaled \magstep0
\font\sn = cmssi8 scaled \magstep0
\long\def\combarak#1{\ifdraft{\sb #1 }\else\ignorespaces\fi}
\newif\ifdraft\drafttrue
\newcommand\name[1]{\label{#1}{\ifdraft{\sn [#1]}\else\ignorespaces\fi}}
\newcommand\eq[2]{{\ifdraft{\ \tt
[#1]}\else\ignorespaces\fi}\begin{equation}\label{#1}{#2}\end{equation}}
\newcommand {\equ}[1]     {\eqref{#1}}
\newcommand{\MM}{{\mathcal{H}}}
\newcommand{\Q}{{\mathbb {Q}}}
\newcommand{\HH}{{\mathbb{H}}}
\newcommand{\R}{{\mathbb{R}}}
\newcommand{\ii}{{\mathbf{i}}}
\newcommand{\Z}{{\mathbb{Z}}}
\newcommand{\N}{{\mathbb{N}}}
\newcommand{\SL}{\operatorname{SL}}
\newcommand{\PSL}{\operatorname{PSL}}
\newcommand{\diag}{{\rm diag}}
\newcommand {\ignore}[1]  {}
\newcommand{\interior}{{\rm int}}
\newcommand{\LL}{{\mathcal L}}
\newcommand{\til}{\widetilde}
\newcommand{\Mat}{{\operatorname{Mat}}}
\newcommand{\sm}{\smallsetminus}
\newcommand{\NSMP}{{\mathrm{SMP}}}
\newcommand{\Hyp}{{\mathrm{Hyp}}}
\newcommand{\LCM}{{\mathrm{LCM}}}
\newcommand{\NLC}{{\mathrm{SC}}}
\newcommand{\Aff}{{\mathrm{Aff}}}
\newcommand{\SC}{{\mathrm{SC}}}
\newtheorem{thm}{Theorem}[section]
\newtheorem{lem}[thm]{Lemma}
\newtheorem{prop}[thm]{Proposition}
\newtheorem{cor}[thm]{Corollary}
\newtheorem{remark}[thm]{Remark}
\begin{document}
\maketitle

\begin{abstract}
For fixed $g$ and $T$ we show the finiteness of the set of affine
equivalence classes of flat surfaces of genus $g$ whose Veech group contains a 
cusp of hyperbolic co-area less than $T$.
We obtain new
restrictions on Veech groups: we show that any non-elementary Fuchsian
group can appear only finitely many times in a fixed stratum, that
any non-elementary Veech group is of finite index in its normalizer,
and that the quotient of $\HH$ by a non-lattice Veech group admits
arbitrarily large embedded disks. A key ingredient of the proof is the
finiteness of the set of affine equivalence classes of flat surfaces
of genus $g$ whose Veech group contains a hyperbolic element with
eigenvalue less than $T$. 
\end{abstract}

\section{Introduction}
Our objects of study are flat surfaces and
their affine automorphism groups. These structures arise in the study 
of rational polygonal billiards, in Thurston's classification of
surface diffeomorphisms in connection with measured foliations, and in
complex analysis. The class of flat surfaces is subdivided into
translation surfaces and half-translation surfaces, which correspond
in the complex analysis literature to abelian and quadratic
differentials respectively.

Let $\Aff(M)$ denote the affine automorphism group of a flat surface $M$. 
Let $G=\SL(2,\R)$ if $M$ is a translation surface, and $\PSL(2,\R)$ if
$M$ is a half-translation surface. Taking the differential of an
automorphism yields a 
homomorphism $D: \Aff(M) \to G$ with finite kernel, whose image 
$\Gamma_M$ is called the {\em Veech group} of $M$. Alternatively,
$\Gamma_M$ is the stabilizer, under the $G$-action, of $M$ in the
stratum $\MM$ of flat surfaces containing $M$. 
For a typical flat surface $\Aff(M)$ (and hence $\Gamma_M$) is
trivial; however surfaces with non-trivial Veech groups are quite
interesting. Much of the interest in the subject was generated by
Veech's discovery \cite{Veech - alternative} of Veech groups which are
non-arithmetic lattices (recall that $\Gamma$ is called a {\em
lattice} if $\HH/\Gamma$ has finite area). Many additional Veech
groups have been described, including infinitely generated ones
\cite{HS, McM}, and there are many intriguing questions regarding
groups which may arise as Veech groups. See \cite{HSsurvey}
for a recent survey of the field, and \cite{toronto} for a problem
list. 

A fundamental question in this regard is to understand the
commensurability classes of Fuchsian groups which contain Veech
groups 
(recall that a {\em Fuchsian group} is a discrete subgroup of $G$ and
two of such groups are {\em commensurable}
if their intersection is of finite index in both).
We will describe some restrictions on groups commensurable to a Veech group
$\Gamma$ and on the geometry of the corresponding $\HH/\Gamma$. The
question of which groups within a given commensurability class
actually arise as Veech groups is also of considerable interest, see
\cite{HS2}. 

It is clear that if $\Gamma$ is a lattice, there is an upper limit on
the radius of a maximal embedded disk. We have the following converse,
which provides a new characterization of lattice surfaces. 
\begin{thm}
\name{cor: new}
If $\Gamma$ is commensurable to a Veech group and is not a lattice
then for every $R$, $\HH/\Gamma$ contains an embedded disk of radius
$R$. 
\end{thm}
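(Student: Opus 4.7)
My plan is to deduce this theorem from the Veech-group form of the same statement highlighted in the abstract — that $\HH/\Gamma_M$ has embedded disks of arbitrary radius whenever $\Gamma_M$ is a non-lattice Veech group — via a commensurability reduction. I will argue in the contrapositive: assuming $\HH/\Gamma$ has injectivity radius uniformly bounded by some $R<\infty$, I conclude that $\Gamma$ is forced to be a lattice, contradicting the hypothesis.

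\textbf{Commensurability reduction.} First I would pass to a common finite-index subgroup: set $\Gamma_0 := \Gamma \cap \Gamma_M$ and let $\Gamma^\ast := \bigcap_{g \in \Gamma} g \Gamma_0 g^{-1}$ denote its normal core in $\Gamma$, a normal subgroup of finite index $N := [\Gamma : \Gamma^\ast]$, with $\Gamma^\ast \subseteq \Gamma_M$. The core of the reduction is to show that $\HH/\Gamma_M$ also has uniformly bounded injectivity radius. Fix any $p \in \HH$ that is not an elliptic fixed point of any element of $\Gamma$, a condition excluding only a discrete subset. By hypothesis there exists $\gamma \in \Gamma \setminus \{1\}$ with $d(p,\gamma p) \leq 2R$, and because $p$ is not a $\Gamma$-elliptic fixed point, $\gamma$ must be non-torsion. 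Then $\gamma^N \in \Gamma^\ast$ is non-trivial, and the triangle inequality gives $d(p,\gamma^N p) \leq 2NR$. Hence the injectivity radius of $\HH/\Gamma^\ast$ is at most $NR$ off a discrete set; since it is locally the minimum of finitely many continuous displacement functions, the bound extends to all of $\HH/\Gamma^\ast$ (the value at $\Gamma^\ast$-elliptic fixed points being zero anyway). Finally, since $\Gamma^\ast \subseteq \Gamma_M$ introduces only further identifications, the injectivity radius of $\HH/\Gamma_M$ is pointwise bounded by $NR$ as well.

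\textbf{Conclusion and main obstacle.} Applying the Veech-group form of the theorem to $\Gamma_M$ now forces $\Gamma_M$ to be a lattice, and since the lattice property is preserved under commensurability, so is $\Gamma$ — contradicting the hypothesis. The real weight of the theorem lies in the Veech-group case itself, which I expect to be the main obstacle; it is presumably established elsewhere in the paper via the finiteness result, highlighted in the abstract, for flat surfaces whose Veech group contains a hyperbolic element of bounded eigenvalue. The commensurability step I have sketched is essentially routine, with the only subtle point being the careful handling of torsion, resolved by working on the dense complement of the discrete set of $\Gamma$-elliptic fixed points.
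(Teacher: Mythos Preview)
Your commensurability reduction contains a genuine error: the inference ``because $p$ is not a $\Gamma$-elliptic fixed point, $\gamma$ must be non-torsion'' is false. An elliptic element $\gamma$ of order $k$ rotates about its fixed point $z_\gamma$; if $p$ is close to but distinct from $z_\gamma$, then $d(p,\gamma p)$ can be arbitrarily small while $\gamma$ remains torsion. When the order of $\gamma$ divides $N=[\Gamma:\Gamma^\ast]$ you get $\gamma^N=1$, and your argument produces no nontrivial element of $\Gamma^\ast$ near $p$. Since you have no a priori bound on the number of cone points of $\HH/\Gamma$ (it is not a lattice, so the quotient could have infinitely many), you cannot simply stay away from a neighborhood of the elliptic fixed points; they could in principle be $R$-dense. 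Patching this gap seems to require exactly the finiteness of small-area cones that the paper establishes as Corollary~\ref{cor: restriction on Gamma}(III), so the reduction is not as routine as you suggest.

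More to the point, the paper never isolates a ``Veech-group case'' to which one could reduce. Corollary~\ref{cor: restriction on Gamma} is stated and proved directly for any $\Gamma$ commensurable to a Veech group: parts (I) and (II) are commensurability invariants, and (III) is derived from (II) together with the Hurwitz bound on elliptic orders. The proof of Theorem~\ref{cor: new} then works with $\Gamma$ itself. Given $R$, one chooses $T$ large enough that any cusp or cone of area at least $T$ already contains an embedded $R$-ball; by Corollary~\ref{cor: restriction on Gamma} only finitely many cusps, cones, and closed geodesics of length below $T$ remain, and a suitable neighborhood $\mathcal{N}$ of their union has finite area. Since $\Gamma$ is not a lattice, $\HH/\Gamma$ has infinite area, so one picks $z\notin\mathcal{N}$; any loop of length less than $2R$ through $z$ would have to be freely homotopic into $\mathcal{N}$, contradicting the choice of $\mathcal{N}$. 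Thus all three parts of Corollary~\ref{cor: restriction on Gamma} --- not only the hyperbolic-eigenvalue finiteness you cite --- enter the argument, and the elliptic part is precisely where your reduction breaks down.
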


We derive a purely group-theoretic property of Veech groups:
\begin{thm}
\name{cor: normalizer}
A non-elementary Fuchsian group commensurable to a Veech group is of finite index
in its normalizer. 
\end{thm}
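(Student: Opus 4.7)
The plan is to apply the key finiteness theorem from the abstract (for fixed $g$ and $T$, only finitely many flat surfaces of genus $g$ modulo affine equivalence have a Veech group containing a hyperbolic element of eigenvalue less than $T$) to the conjugation action of $N(\Gamma)$ on a fixed hyperbolic element.

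First I would verify that $N(\Gamma)$ is discrete. Since $\Gamma$ is non-elementary, choose hyperbolic $\gamma_1,\gamma_2\in\Gamma$ with distinct axes; the continuous map
\[
N(\Gamma)\longrightarrow\Gamma\times\Gamma,\qquad g\longmapsto (g\gamma_1 g^{-1},\,g\gamma_2 g^{-1})
\]
has fibers equal to cosets of $Z_G(\gamma_1)\cap Z_G(\gamma_2)$, which is trivial modulo the center of $G$ (two hyperbolic elements with distinct axes have centralizers meeting only there). Since $\Gamma\times\Gamma$ is discrete, this forces $N(\Gamma)$ to be discrete. Consequently, for any hyperbolic $\gamma\in\Gamma$, the centralizer $Z_{N(\Gamma)}(\gamma)$ is a discrete subgroup of the one-parameter group $Z_G(\gamma)\cong\R$, hence cyclic, and contains $Z_\Gamma(\gamma)$ as a subgroup of finite index. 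I would then reduce to the case $\Gamma\subseteq\Gamma_M$ via a normal-core argument: the intersection of all $N(\Gamma)$-conjugates of $\Gamma\cap\Gamma_M$ inside $\Gamma$ is a finite intersection (there are only finitely many subgroups of $\Gamma$ of a given finite index), giving a finite-index subgroup $\Gamma'$ of both $\Gamma$ and $\Gamma_M$ that is normalized by $N(\Gamma)$, so $[N(\Gamma):\Gamma]<\infty$ follows from $[N(\Gamma'):\Gamma']<\infty$.

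Now assuming $\Gamma'\subseteq\Gamma_M$ with $\Gamma_M$ a Veech group and $[\Gamma_M:\Gamma']<\infty$, suppose for contradiction that $[N(\Gamma'):\Gamma']=\infty$. Pick a hyperbolic $\gamma\in\Gamma'$ of eigenvalue $\lambda$. For each $g\in N(\Gamma')$, the element $g\gamma g^{-1}\in\Gamma'\subseteq\Gamma_M$ has eigenvalue $\lambda$, and in its pair-based form (tracking the distinguished hyperbolic element, essentially the content of an Ivanov-style bound on pseudo-Anosov mapping classes of bounded dilation) the finiteness theorem forces the set $\{g\gamma g^{-1}:g\in N(\Gamma')\}$ to fall into finitely many $\Gamma'$-conjugacy classes. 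A direct coset computation shows that $g_1,g_2\in N(\Gamma')$ yield $\Gamma'$-conjugate elements if and only if $g_1 g_2^{-1}\in T:=\Gamma'\cdot Z_{N(\Gamma')}(\gamma)$, a subgroup of $N(\Gamma')$ containing $\Gamma'$. Hence $[N(\Gamma'):T]<\infty$; combining this with $[T:\Gamma']=[Z_{N(\Gamma')}(\gamma):Z_{\Gamma'}(\gamma)]<\infty$ from the first step yields $[N(\Gamma'):\Gamma']<\infty$, a contradiction. The main obstacle is formulating the finiteness theorem in the required pair-based form — one needs to track a distinguished hyperbolic Veech element modulo $\Gamma_M$-conjugacy rather than merely the surface modulo affine equivalence — and to carry out the normal-core reduction so that non-elementarity and all relevant finite indices are preserved.
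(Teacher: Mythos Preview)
Your overall strategy coincides with the paper's: both arguments hinge on (a) discreteness of the normalizer, (b) cyclicity of centralizers in a Fuchsian group, and (c) the finiteness of $\Gamma$-conjugacy classes of hyperbolic elements of bounded eigenvalue. Your steps 6--7 are exactly the content of the paper's Lemma~\ref{lem: normal subgroup}: since $\Gamma'$ is normal in $N(\Gamma')$, the set $T=\Gamma'\cdot Z_{N(\Gamma')}(\gamma)$ is a subgroup, and $T/\Gamma'\cong Z_{N(\Gamma')}(\gamma)/Z_{\Gamma'}(\gamma)$ is finite because centralizers in a Fuchsian group are cyclic. So the heart of the proof is the same.

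The gap is in your normal-core reduction. You assert that ``there are only finitely many subgroups of $\Gamma$ of a given finite index,'' but this is true only for finitely generated groups, and Veech groups can be infinitely generated (the paper cites \cite{HS, McM} for explicit examples). Thus the intersection $\bigcap_{g\in N(\Gamma)} g(\Gamma\cap\Gamma_M)g^{-1}$ need not have finite index in $\Gamma$, and you cannot pass to a finite-index $\Gamma'\subseteq\Gamma_M$ normalized by $N(\Gamma)$. The paper sidesteps this entirely: rather than reducing to a subgroup of $\Gamma_M$, it observes (Corollary~\ref{cor: restriction on Gamma}) that the finiteness of hyperbolic conjugacy classes of bounded eigenvalue is a commensurability invariant. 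Concretely, if $[\Gamma:\Gamma\cap\Gamma_M]=n$, then for hyperbolic $h\in\Gamma$ one has $h^{n}\in\Gamma_M$, and since $n$-th roots of a hyperbolic element are unique in $G$, distinct $\Gamma$-conjugacy classes of $h$'s yield distinct $\Gamma_0$-conjugacy classes of $h^n$'s, which are controlled by the finiteness for $\Gamma_M$. With this in hand, your steps 1 and 5--7 apply directly to $\Gamma$ itself, and the problematic reduction becomes unnecessary.
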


These theorems follow from some finiteness results which involve
an upper bound
on either the covolume of a cusp for $\Gamma_M$, or the eigenvalue of a
hyperbolic element in $\Gamma_M$. 
For a Fuchsian group $\Gamma$, to a cusp in $\HH/\Gamma$ one may
associate the {\em cusp area} ---  the hyperbolic area of a
maximal continuous embedded family of parallel horocycles --- and the
number of cylinders in the corresponding decomposition of the
surface. Moreover these quantities are invariant under affine equivalence. 
Equivalently, in group-theoretic terms, a cusp in
$\HH/\Gamma$ is determined by the conjugacy class of a 
maximal parabolic subgroup $P \subset \Gamma$. Associating to each
such $P$ the area $t_0(\Gamma, P)$ and cylinder number $m =
m(M, P) \in \N$, we have that for all $g \in G$, $\Gamma_{gM} =
g\Gamma_{M} g^{-1}$ contains the maximal 
parabolic subgroup $gPg^{-1}$ and 
$$t_0(g\Gamma_Mg^{-1}, gPg^{-1}) = t_0(\Gamma_M, P), \ \ \ m(gM, gPg^{-1}) = m(M,P).$$

Let 
$$\NLC(m, T) = \{ (M, P): 
t_0(\Gamma_M, P) \leq 
T, \, m(M,P) =m \},$$
where $M$ ranges over all flat surfaces, and
$P$ ranges over all maximal parabolic subgroups of $\Gamma_M$
(SC stands for `small cusp'). We denote by $\til \NLC (m, T)$ the
corresponding set of affine equivalence classes.

The following holds:

\begin{thm}
\name{thm: cusp areas finite}
For any $T>0$ and any $m \in \N$, $\til \NLC(m, T)$ is finite.
\end{thm}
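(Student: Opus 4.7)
The plan is to reduce to the companion finiteness theorem for hyperbolic elements announced in the abstract. The starting point is the Thurston--Veech classification: a maximal parabolic $P \subset \Gamma_M$ corresponding to an $m$-cylinder cusp gives a cylinder decomposition of $M$ into $m$ cylinders in a common direction with commensurable moduli $\mu_i = n_i \mu_0$ for positive integers $n_i$. After acting by an element of $G$, I may assume the cylinders are horizontal and a generator of $P$ is $\gamma = \begin{pmatrix} 1 & h \\ 0 & 1 \end{pmatrix}$ with $h = 1/\mu_0$. Working in the upper half-plane model with $\infty$ as the cusp of $P$, the maximal embedded horoball at $\infty$ descends to height $y_0 = 1/c_{\min}$, where $c_{\min}$ is the smallest $|c|$-entry of a non-parabolic element of $\Gamma_M$, and the cusp area equals $h \cdot c_{\min}$; thus $t_0 \leq T$ yields an element $g = \begin{pmatrix} a & b \\ c & d \end{pmatrix} \in \Gamma_M$ with $|c| \leq T/h$.

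Next, I would use $g$ to manufacture a hyperbolic element of bounded dilatation. The trace of $\gamma^n g$ is $(a+d) + nhc$, an arithmetic progression in $n$ with step size $|hc| \leq T$. Choosing $n$ to place $|\operatorname{tr}(\gamma^n g)|$ in the interval $(2, 2+T]$ produces a hyperbolic element of $\Gamma_M$ with eigenvalue bounded by some $T' = T'(T)$.

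Finally, I would conclude using the hyperbolic finiteness theorem. The main obstacle is that this theorem is stated within a fixed stratum, while $\til \NLC(m, T)$ ranges over all flat surfaces; so I must first bound the topological type of $M$ in terms of $m$ and $T$ alone. The expected mechanism is that fixing the cylinder number $m$ and bounding the cusp area together constrain both the integer moduli ratios $n_i$ and the combinatorial gluings of the $m$ cylinders: the hyperbolic element from the previous step must preserve the cylinder combinatorics, and its bounded dilatation restricts how many saddle connections can appear on each cylinder boundary, bounding the number of singular points and hence the genus. Once the stratum is bounded, the hyperbolic finiteness theorem applies within each of the finitely many possible strata and yields the conclusion. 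The heart of the difficulty lies in making this stratum bound rigorous, likely via a combinatorial-cum-compactness argument in the appropriate moduli space of flat surfaces.
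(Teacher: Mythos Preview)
Your reduction to the hyperbolic finiteness result is the weaker of two routes, and the gap you flag at the end is real and cannot be closed by the mechanism you sketch. Fixing the number $m$ of cylinders in one direction does not bound the genus: a single horizontal cylinder whose top and bottom are glued by an interval exchange with many intervals already gives translation surfaces of arbitrarily high genus, and nothing about your hyperbolic element $\gamma^n g$ constrains that gluing. There is no reason $\gamma^n g$ should respect the cylinder decomposition, so the heuristic that it ``preserves the cylinder combinatorics'' and thereby bounds the number of boundary saddle connections has no force. The paper in fact remarks (after the proof of Theorem~\ref{thm: NLC precise}) that producing a hyperbolic element of bounded dilatation from a small cusp only yields finiteness of $\til\SC(m,T)\cap\MM$ for each \emph{fixed} stratum $\MM$, precisely the weaker statement your argument reaches.

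The missing idea is that the element witnessing the cusp-area bound is not an arbitrary matrix with small $|c|$-entry but a \emph{conjugate of the original parabolic}. This is the content of Proposition~\ref{prop: cusp}: the maximal embedded horoball is tangent to one of its $\Gamma_M$-translates, and the translating element carries $P$ to another maximal parabolic. After normalization $\Gamma_M$ contains both $\gamma_1=h_{t_0}$ and $\gamma_2=\til h_{-t_0}$, conjugate in $\Gamma_M$ by some $\gamma=D\psi$. Hence there is a \emph{second}, vertical cylinder decomposition with the same number $m$ of cylinders, namely the $\psi$-image of the horizontal one. The two transverse decompositions cut $M$ into parallelograms; the symmetric intersection matrix $A$ and the diagonal Dehn-twist matrix $D$ satisfy $\lambda(DA)\le C_m T$, so Perron--Frobenius (Proposition~\ref{prop: PF finiteness}) leaves only finitely many $(A,D)$, and for each $A$ only finitely many gluing patterns. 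The gluing pattern determines the stratum and, together with the eigenvector data, the surface itself up to the $G$-action. This is how the paper bounds the combinatorics directly, without ever needing to bound the stratum in advance.
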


Our proof yields an explicit bound on $\# \, \til \NLC(m,T)$, 
see Theorem
\ref{thm: NLC precise}.

Let $\mu$ be haar measure on $G$ and let $\Gamma$ be a discrete
subgroup of $G$. We denote the covolume of $\Gamma$ in $G$ by
$\bar{\mu}(\Gamma)$; this number depends only on the conjugacy class
of $\Gamma$. We
deduce:

\begin{cor}
\name{cor: covolumes finite}
Let $\MM$ be a stratum of flat surfaces and let $R, T>0$. Then the
following sets are finite:
\begin{itemize}
\item[(i)]
Affine equivalence classes of $M \in \MM$ for which $\HH/\Gamma_M$
contains no embedded ball of radius $R$. 
\item[(ii)]
Affine equivalence classes of $M \in \MM$ for 
which $\bar{\mu}(\Gamma_M) <T$.
\end{itemize}
\end{cor}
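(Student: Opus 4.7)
The plan is to derive (ii) from (i) via a hyperbolic area estimate, and to prove (i) by splitting on whether $\Gamma_M$ is cocompact, invoking Theorems \ref{cor: new} and \ref{thm: cusp areas finite} together with the short-hyperbolic-element finiteness result announced in the abstract.

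For (i), I first observe that if $\HH/\Gamma_M$ admits no embedded disk of radius $R$ then $\Gamma_M$ must be a lattice, since otherwise Theorem \ref{cor: new} (applied to $\Gamma=\Gamma_M$, trivially commensurable to itself) would supply embedded disks of every radius. If $\Gamma_M$ is non-cocompact, I would pick a maximal parabolic $P\subset \Gamma_M$; in the upper half-plane model with $P=\langle z\mapsto z+w\rangle$ and maximal embedded horoball $\{y\geq y_0\}$, a direct computation gives the injectivity radius at $(x,y)$ in the horoball as $\tfrac{1}{2}\operatorname{arccosh}(1+w^2/(2y^2))$, maximized at $y=y_0$ with value depending only on the ratio $w/y_0=t_0(\Gamma_M,P)$. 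Hence the hypothesis forces $t_0(\Gamma_M,P)\leq c(R)$, where $c(R)=\sqrt{2(\cosh 2R-1)}$. The cylinder number $m(M,P)$ is bounded by some $m_0=m_0(\MM)$ depending only on the stratum (by an Euler characteristic count), so $M$ lies in the image under $(M,P)\mapsto M$ of $\bigcup_{m\leq m_0}\til\NLC(m,c(R))$, a finite set by Theorem \ref{thm: cusp areas finite}. If $\Gamma_M$ is cocompact, at any point of $\HH$ sufficiently far from the elliptic fixed-point set of $\Gamma_M$, an injectivity radius $\leq R$ must be realized by a hyperbolic element, producing an element of $\Gamma_M$ with translation length $\leq 2R$ and eigenvalue $\leq e^R$; finiteness then follows from the short-hyperbolic-element theorem announced in the abstract.

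For (ii), the bound $\bar\mu(\Gamma_M)<T$ yields an upper bound on the hyperbolic area of $\HH/\Gamma_M$ up to the normalization of Haar measure on $G$; since an embedded hyperbolic disk of radius $R$ has area $4\pi\sinh^2(R/2)$, for $R\geq R_0(T)$ sufficiently large no such embedded disk can exist, so (i) applies with $R=R_0(T)$.

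The main obstacle is the cocompact subcase of (i): one must uniformly bound the contribution of elliptic elements to the injectivity radius in terms of the stratum $\MM$, which amounts to controlling the number and orders of elliptic fixed points of $\Gamma_M$ per fundamental domain (and locating a point of $\HH$ at hyperbolic distance at least, say, $10R$ from all of them). The cusp computation is by contrast a routine hyperbolic-metric exercise, and the reduction of (ii) to (i) is immediate from Gauss--Bonnet.
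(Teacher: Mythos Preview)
Your treatment of the non-cocompact subcase of (i), and your reduction of (ii) to (i), are both correct and close in spirit to the paper's argument. The simplification you are missing is that the cocompact subcase is \emph{vacuous}: a lattice Veech group is never cocompact. This is a standard fact going back to Veech --- every flat surface carries a saddle connection, and if $\Gamma_M$ is a lattice then $GM\cong G/\Gamma_M$ is closed in $\MM$; contracting the saddle connection under the geodesic flow shows $G/\Gamma_M$ is noncompact, so $\Gamma_M$ contains a parabolic. The paper invokes this implicitly (``the $M_i$ are lattice surfaces, hence $\Gamma_i$ contains at least one maximal parabolic $P_i$''), which dissolves your self-identified ``main obstacle'' and makes any appeal to Proposition~\ref{prop: Thurston bound} unnecessary here. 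With that case removed, your proof of (i) is the paper's proof.

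For (ii) the paper is marginally more direct than your route through (i): since the maximal embedded cusp injects into $\HH/\Gamma_M$, one has $t_0(\Gamma_M,P)\leq \bar\mu(\Gamma_M)<T$ (up to the normalization of $\mu$), and Theorem~\ref{thm: cusp areas finite} then applies immediately with the stratum bound on $m$. Your disk-area reduction is of course also valid.
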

Note that Veech 
\cite{Veech - alternative} constructs lattice surfaces $M_n$ for
all $n \neq 4$ (on different strata) such that
$\bar{\mu}(\Gamma_{M_n}) < 2\pi.$ Thus one cannot omit the hypothesis that
$M$ is contained in a fixed stratum in 
Corollary \ref{cor: covolumes finite}. Assertion (ii) was proved
independently by Curt 
McMullen, using the algebraic geometry of moduli space. 

Suppose $\Gamma_M$ contains
a hyperbolic element $h$. We denote the larger eigenvalue of $h$ by $\lambda(h)$
and call it the {\em eigenvalue} of $(M,h)$. Also, associated to $h$
are {\em Markov partitions} of $M$ (see \S5). We let
$p = p(M,h)$ be the minimal number of parallelograms 
in a Markov partition. For $T>0$ and $m \in \N$ we define 
$$\NSMP (p,T) = \{(M,h): p(M,h) =p, \lambda(h) <T \}$$
(SMP stands for `simple Markov partition'). 
As before these quantities are invariant under affine equivalence: if
$h \in \Gamma_M$ is hyperbolic then $ghg^{-1} \in 
\Gamma_{gM}$ is also hyperbolic with $p(M,h)=p(gM, ghg^{-1})$ and $\lambda(h)
= \lambda(ghg^{-1})$. We denote
the set of affine equivalence classes in $\NSMP$ by $\til \NSMP.$ 
Repeating a folklore argument\footnote{The argument is probably due to
Thurston. We were unable to find a suitable
reference, but see \cite{Thurston} for a hint and \cite{Rykken} for
more details.} we obtain:
\begin{prop}[Thurston, Veech]
\name{prop: Thurston bound}
For a fixed $T>0$ and $p \in \N$, 
$\til \NSMP(p,T)$ is finite.
\end{prop}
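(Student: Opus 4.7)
The plan is to associate to each $(M,h)\in\NSMP(p,T)$ a finite combinatorial invariant --- a minimal Markov partition for $h$ --- and to show (i) that there are only finitely many such invariants as a function of $p$ and $T$, and (ii) that the invariant determines the class of $(M,h)$ in $\til\NSMP(p,T)$.

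The hyperbolic element $h\in\Gamma_M$ lifts to an affine pseudo-Anosov automorphism with stable and unstable foliations along the eigendirections of $Dh$ and expansion constant $\lambda=\lambda(h)<T$. Fix a Markov partition realizing the minimum $p$, consisting of parallelograms $R_1,\dots,R_p$ with sides along these foliations. Record the non-negative integer transition matrix $A=(A_{ij})\in\Mat_{p\times p}(\N)$, counting the horizontal substrips of $h(R_i)$ landing in $R_j$, together with a gluing datum specifying the vertical order of these substrips inside each $R_j$ and the identifications of vertical edges among the $R_i$. Once $A$ is fixed, the gluing datum ranges over a finite set of size bounded in terms of the entries of $A$.

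The Perron eigenvalue of $A$ equals $\lambda<T$. Minimality of the partition forces $A$ to be primitive: otherwise a non-trivial invariant subset of indices would allow rectangles to be merged, producing a Markov partition with fewer pieces. By Wielandt's bound, $A^{N}>0$ entrywise for some $N\le p^{2}$, whence
\[
A_{ij}\;\le\;(A^{N+1})_{ij}\;\le\;\operatorname{tr}(A^{N+1})\;\le\;p\,T^{\,N+1}.
\]
Since $A$ has integer entries, only finitely many matrices $A$ occur, and hence only finitely many combinatorial invariants arise for given $p$ and $T$.

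It remains to recover $(M,h)$ from the invariant. The widths $w=(w_i)$ and heights $v=(v_i)$ of the parallelograms must satisfy the Markov equations $Aw=\lambda w$ and $A^{T}v=\lambda v$, so by Perron--Frobenius they are determined up to positive scalars. The one-parameter rescaling $(w,v)\mapsto(cw,c^{-1}v)$ corresponds to applying $\diag(c,c^{-1})\in G$ to the surface and is absorbed by the affine equivalence relation; any residual area rescaling is handled by the usual normalization in the stratum. Once $w$ and $v$ are pinned down, the gluing datum reassembles $M$ together with the action of $h$. The main obstacle is to verify that minimality of the Markov partition forces primitivity of $A$, and more generally that the continuous moduli of rectangle widths and heights are rigid given the combinatorics; both points follow from standard properties of pseudo-Anosov Markov partitions, as in the references cited in the footnote.
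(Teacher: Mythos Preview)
Your approach is the paper's: associate to $(M,h)$ its Markov partition data (transition matrix plus gluing combinatorics), bound the number of such data via Perron--Frobenius, and reconstruct the surface from the data. But there are two genuine gaps.

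First, your argument for primitivity of $A$ (``a non-trivial invariant subset of indices would allow rectangles to be merged'') is actually an argument for irreducibility; an imprimitive irreducible matrix has no invariant proper subset of indices. This is harmless in the end, since irreducibility already suffices to bound the entries of $A$ (the paper invokes Proposition~\ref{prop: PF finiteness}, and one can argue directly: irreducibility gives $v_i/v_j\le\lambda^{p}$ for the Perron vector, whence $A_{ij}\le\lambda\,v_i/v_j\le T^{p+1}$), and in any case pseudo-Anosov transition matrices are primitive by topological mixing. But the justification as written is confused.

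The substantive gap is in the reconstruction. Knowing the widths $w_i$ and heights $v_i$ of the rectangles together with a purely combinatorial gluing datum does \emph{not} determine $M$: the positions of singularities and of corners of neighbouring rectangles along the boundary of each $R_i$ --- in the paper's language, the lengths of the horizontal and vertical edge segments $\xi_i,\eta_j$ of \S2.2.2 --- are continuous parameters not fixed by $(w,v)$ alone. The paper resolves this by refining the Markov partition: cutting each rectangle along vertical lines through the endpoints of the $\xi_i$ gives a finer partition $\{Q_k\}$ which is again Markov (this requires a short case check), with a transition matrix $B$ computable from $A$ and the gluing pattern; the $\xi_i$-lengths then form the Perron eigenvector of $B$ and are determined up to a scalar fixed by the already-known $w_i$. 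An analogous refinement handles the $\eta_j$. You flag ``continuous moduli \ldots\ are rigid given the combinatorics'' as the main obstacle and defer to the references, but this is precisely the step where the work lies, and it concerns the edge-segment lengths, not merely the rectangle dimensions.

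Finally, the statement covers half-translation surfaces as well; the paper reduces to the translation case via the orientation double cover (Proposition~\ref{prop: double cover}), which sends $(M,h)\in\NSMP(p,T)$ to $(\til M,h)\in\NSMP_1(2p,T)$ with each $\til M$ covering only finitely many $M$.
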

The existence of Markov partitions is sketched in \cite{travaux} and
we explain it in detail in 
an appendix to our paper. In particular we show that bounding $p$
is equivalent to bounding the genus of $M$. 
Note that by \cite{Penner}, there are pairs $(M_n,
h_n)$ with $\lambda(h_n) \to 1$, i.e. one cannot omit
a bound on $p$ (or on the topology of $M$) from the statement. 

Recall that the {\em geodesic flow} is the restriction of the
$G$-action on $\MM$ to the one-parameter subgroup of diagonal
matrices. There is a bijective correspondence between $G$-orbits of
pairs $(M,h)$ as above,
and surfaces with a periodic trajectory under the geodesic
flow, with the length of the corresponding trajectory equal to $\log
\lambda(h)$. 
Thus Proposition \ref{prop:
Thurston bound} is equivalent to the statement, proved by Veech \cite{Veech
- geodesic flow}, that the number of
periodic geodesic trajectories in $\MM$ of length at most $T$
is finite.

For a group $\Gamma$ and $h
\in \Gamma$ we write $h^{\Gamma}$ for the conjugacy class of $h$ in
$\Gamma$. From Theorem \ref{thm: cusp areas finite} and Proposition
\ref{prop: Thurston bound} we derive a 
restriction on Veech groups:


\begin{cor}
\name{cor: restriction on Gamma}
Suppose $\Gamma$ is commensurable to a Veech group. Then for any
$T>0$, the following sets are finite:
\begin{itemize}
\item[(I)]
$\{ P \subset \Gamma : P \mathrm{\ is \ a \ maximal \ parabolic,
} \ t_0(\Gamma,P) \leq T \}.$
\item[(II)]
$\{h^{\Gamma} : h \in \Gamma \mathrm{\ is \ hyperbolic \ with \ } \lambda(h)
< T \}.$
\item[(III)]
$\{f^{\Gamma}: f \in \Gamma \mathrm{\ is \ elliptic \ with \ cone \
area \ at \ most \ }T \}.$
\end{itemize}
\end{cor}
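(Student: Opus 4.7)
The plan is to reduce each assertion to its analogue for the Veech group $\Gamma_M$ (with which $\Gamma$ is, by hypothesis, commensurable), and then to apply Theorem \ref{thm: cusp areas finite}, Proposition \ref{prop: Thurston bound}, and a classical mapping class group argument for the elliptic case.

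For the commensurability reduction, set $\Gamma_0 = \Gamma \cap \Gamma_M$, of finite index in both groups. Restriction induces finite-to-one correspondences among $\Gamma$-, $\Gamma_0$-, and $\Gamma_M$-conjugacy classes of elements of each type (parabolic, hyperbolic, elliptic). Under these passages the eigenvalue $\lambda$ is invariant, while $t_0$ and the cone area distort only by factors bounded by the commensurability indices, since these areas rescale with the covering degrees of the maps $\HH/\Gamma_0 \to \HH/\Gamma$ and $\HH/\Gamma_0 \to \HH/\Gamma_M$. Hence it suffices to prove (I)--(III) for $\Gamma = \Gamma_M$, possibly with an enlarged value of $T$.

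For (I), each $\Gamma_M$-conjugacy class of maximal parabolic $P$ with $t_0(\Gamma_M, P) \leq T$ gives rise to a pair $(M, P) \in \NLC(m, T)$ where $m = m(M, P)$, and distinct conjugacy classes of $P$ (with $M$ fixed) yield distinct affine equivalence classes. The cylinder count $m$ is bounded by a constant $m_0 = m_0(\MM)$ depending only on the stratum (via genus and singularity profile). By Theorem \ref{thm: cusp areas finite}, $\til \NLC(m, T)$ is finite for each $m$; summing over $m \leq m_0$ yields (I). Part (II) is structurally parallel, using Proposition \ref{prop: Thurston bound} together with the fact that the Markov partition complexity $p(M, h)$ is bounded in terms of the genus of $M$, as shown in the Appendix.

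For (III), I expect the cone area bound to be superfluous: $\Gamma_M$ has only finitely many conjugacy classes of elliptic elements in total. Indeed, each elliptic $f \in \Gamma_M$ lifts through $D : \Aff(M) \to \Gamma_M$ (whose kernel consists of the finite group of translations, in genus $\geq 2$) to a finite-order affine automorphism, and the latter descends to a finite-order element of $\Mod(M)$; the mapping class group has only finitely many conjugacy classes of finite-order elements by classical results, so $\Gamma_M$ has only finitely many elliptic conjugacy classes, and (III) follows at once. The main obstacle I anticipate is the commensurability step---making precise the area distortion bounds under $\Gamma \supset \Gamma_0 \subset \Gamma_M$ requires some care, since the earlier finiteness theorems are formulated for Veech groups rather than for arbitrary commensurable Fuchsian groups.
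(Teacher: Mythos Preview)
Your treatment of (I) and (II) is correct and matches the paper's approach: reduce to $\Gamma = \Gamma_M$ by commensurability, bound the number of cylinders (respectively Markov parallelograms) in terms of the stratum, and invoke Theorem~\ref{thm: cusp areas finite} (respectively Proposition~\ref{prop: Thurston bound}).

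Your argument for (III), however, has a genuine gap. The inference ``$\Mod(M)$ has only finitely many conjugacy classes of finite-order elements, so $\Gamma_M$ has only finitely many elliptic conjugacy classes'' is invalid: two finite-order affine automorphisms $\varphi_1, \varphi_2 \in \Aff(M)$ can be conjugate in $\Mod(M)$ without being conjugate in $\Aff(M)$, since the conjugating mapping class need not be realizable by an affine map. Thus finiteness of torsion conjugacy classes in $\Mod(M)$ does not transfer to $\Gamma_M$, and there is no reason to expect a non-lattice (possibly infinitely generated) Veech group to have only finitely many elliptic conjugacy classes without some geometric constraint such as the cone-area bound.

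The paper instead proves (III) by reducing it to (II). Hurwitz's theorem uniformly bounds the order of any elliptic $f\in\Gamma$, so the cone-area bound $T$ yields a bound on the radius $R(f)$ of \equ{eq: defn R}. Proposition~\ref{prop: cones} then associates to each such $f$ a hyperbolic commutator $h=[\gamma,f]\in\Gamma$ whose eigenvalue is bounded in terms of $R(f)$ and whose axis lies at bounded distance from the fixed point $z_f$. By the already-established (II), the resulting $h$'s fall into only finitely many $\Gamma$-conjugacy classes; after conjugating so that the $h$'s agree, the fixed points $z_f$ lie within bounded distance of a compact fundamental segment on the axis, and discreteness of $\Gamma$ forces finiteness. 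The cone-area hypothesis is thus used essentially.
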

Here the {\em cone area} associated with an elliptic $f \in \Gamma$ is
the area of $B(z_f, R)/ \langle f \rangle$, where $z_f \in \HH$ is the
fixed point of $f$ and 
\eq{eq: defn R}{
R = R(f)= \sup \left \{r: B(z_f, r) / \langle f \rangle \to \HH/\Gamma \mathrm{
\ is \ injective} \right \}.
}


Given a Fuchsian group, it is natural to ask `how often' it arises as a
Veech group. Cyclic parabolic subgroups are associated with cylinder
decompositions and are easily described. In any stratum there are
infinitely many of them belonging to different $G$-orbits. Using torus
covers one can construct 
infinitely many flat surfaces $M$, in different strata, with the same
non-elementary Veech group. Also, if $\Gamma = \Gamma_M$ and $g \in G$
normalizes $\Gamma$ then also $\Gamma = \Gamma_{gM}$. As an
application of our results we show that aside from these
simple constructions, each group can only appear finitely many times, namely:

\begin{cor}
\name{cor: group determines surface}
For any stratum $\MM$ and any infinite Fuchsian group $\Gamma$ which
is not cyclic parabolic, the
set 
\eq{eq: M(Gamma)}{\left\{M \in \MM: \Gamma_M =\Gamma \right\}
}
contains finitely many $N$-orbits, where $N$ is the normalizer of
$\Gamma$ in $G$. In particular if $\Gamma$ is non-elementary, 
\equ{eq: M(Gamma)} is finite. 
\end{cor}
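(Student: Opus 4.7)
The plan is to reduce the statement to the finiteness results already established, principally Proposition \ref{prop: Thurston bound}. The starting observation is that any infinite Fuchsian group which is not cyclic parabolic must contain a hyperbolic element. Indeed, an infinite discrete subgroup $\Gamma$ of $G$ contains an element of infinite order (a discrete periodic group in $G$ is finite), and every infinite-order element of $G$ is parabolic or hyperbolic. If $\Gamma$ had no hyperbolic element, then two parabolics $p_1,p_2 \in \Gamma$ with distinct fixed points would satisfy $|\mathrm{tr}(p_1^n p_2)| \to \infty$, yielding a hyperbolic element in $\Gamma$; so all parabolics in $\Gamma$ share a common boundary fixed point $\xi$. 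Any elliptic $f \in \Gamma$ would then conjugate parabolics to parabolics, forcing $f(\xi)=\xi$, which is impossible for a nontrivial elliptic. Hence $\Gamma$ fixes $\xi$, sits in a Borel subgroup of $G$, and is therefore cyclic parabolic, contradicting the hypothesis.

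Having fixed a hyperbolic $h \in \Gamma$, choose any $T > \lambda(h)$. For every $M \in \MM$ with $\Gamma_M = \Gamma$ we have $h \in \Gamma_M$ hyperbolic with $\lambda(h) < T$; and since the genus is constant throughout $\MM$, the appendix bounds the parallelogram number $p(M,h)$ by some $p_0$ depending only on $\MM$. Thus all such pairs $(M,h)$ lie in $\bigcup_{p \le p_0} \til \NSMP(p,T)$, a finite set by Proposition \ref{prop: Thurston bound}. Consequently, the set $\{M \in \MM : \Gamma_M = \Gamma\}$ consists of finitely many affine equivalence classes.

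To upgrade this to the $N$-orbit statement: if $M_1,M_2$ both have Veech group $\Gamma$ and $M_2 = g M_1$ for some $g \in G$, then $\Gamma = \Gamma_{M_2} = g\Gamma_{M_1} g^{-1} = g\Gamma g^{-1}$, so $g \in N$; conversely $N$ preserves the set in question. Hence affine equivalence restricted to this set coincides with the $N$-orbit relation, giving finitely many $N$-orbits. For the final sentence, when $\Gamma$ is non-elementary, Theorem \ref{cor: normalizer} yields $[N:\Gamma]<\infty$; since the $N$-stabilizer of any $M$ in the set equals $\Gamma_M = \Gamma$, each $N$-orbit has cardinality $[N:\Gamma]$, so the union of finitely many $N$-orbits of finite size is finite.

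The only step requiring genuine care is the group-theoretic argument producing the hyperbolic element in $\Gamma$; once that is in hand, the result is a direct application of previously proved finiteness statements together with the bookkeeping of $N$-versus-$G$ orbits.
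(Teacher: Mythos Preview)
Your proof is correct and matches the paper's argument step for step: both fix a hyperbolic $h \in \Gamma$, bound $p(M,h)$ on $\MM$ via Proposition~\ref{prop: Markov partitions exist}, apply Proposition~\ref{prop: Thurston bound} to get finitely many $G$-orbits, and then finish with the observation that $g \in N$ together with Theorem~\ref{cor: normalizer}. You add a justification for the existence of $h$, which the paper simply asserts; one small caveat is that in the $\SL(2,\R)$ case your argument (and the paper's implicit claim) overlooks groups of the form $\langle h_t, -I\rangle$, which are infinite and not cyclic parabolic yet contain no hyperbolic element.
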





\medskip
{\bf Acknowledgements.} We thank Yair Minsky and Yair Glasner for useful
discussions. The support of NSF grant 
DMS-0302357, BSF grant 2004149 and ISF grant 584-04 is gratefully
acknowledged. Some of the results of this paper were announced in
\cite{toronto}.

\section{Basics}
In this section we set some
notation, and collect standard results. 
\subsection{Flat surfaces}
\name{section: flat
surfaces}
We begin by listing some definitions. For more details we refer the reader to 
\cite{MT, Vorobets, zorich}.

Throughout this paper, $S$ denotes a compact connected orientable
surface of genus $g$.
When $S$ is equipped with the structure of a flat surface or a
quadratic differential we will usually denote it by $M$. When
confusion may arise we will also use $M$ to denote the underlying
surface $S$. A flat surface admits several equivalent
definitions. It may be thought of as an equivalence class of atlases
of charts $(U_{\alpha}, \varphi_{\alpha})$ covering all but a finite
set $\Sigma = \Sigma_M \subset S$ of {\em singularities}, such that
the transition functions 
$\varphi_{\alpha} \circ \varphi_{\beta}^{-1}$ are of the form 
$z \mapsto \pm z + c$, and such that for each $\sigma \in \Sigma$ the charts combine to form
$k$-pronged singularity at $\sigma$, where $k=k_\sigma \in \N$. If
$k_\sigma=2$ then $\sigma$ is called a {\em removable singularity} or
{\em marked point.}
We always assume that $\Sigma_M \neq \varnothing.$
Two atlases $M, M'$ are {\em compatible} if $M \cup M'$ is also a
quadratic differential. The atlases $M=\left(U_{\alpha}, \varphi_{\alpha} \right), \,
M'$ are {\em equivalent} if there is a
self-homeomorphism $h : S \to S$ such that $h(\Sigma_M) = \Sigma_{M'}$
and $\left(h(U_{\alpha}),
\varphi_{\alpha} \circ h \right)$ is compatible with $M'$. A flat
surface is called a {\em translation surface}, or an {\em abelian
differential}, if there is a compatible atlas in which all transition 
functions are of the form $z \mapsto z+c.$ A flat surface which is not
a translation surface is called a {\em half-translation surface}. 
An {\em
affine automorphism} of $M$ is a  
self-homeomorphism of $S$ which is affine in each chart.
Fix the data giving the number of singularities, the vector
$\vec{k}=\left(k_\sigma \right)_{\sigma \in \Sigma}$, and the determination
whether or not the flat surfaces are translation surfaces;  then the set of all
quadratic differentials sharing this data is called a {\em
stratum}.
Each stratum is equipped with a structure of
an affine orbifold, which is locally modelled on a relative cohomology
group, see \cite{MS, MT}. The group $G$ acts on $\MM$ by
post-composition on each chart in an atlas.  

A flat surface inherits from the plane a singular foliation on $S$ called the
{\em horizontal (resp. vertical) foliation}, with 
each chart foliated into the lines parallel to the
x-axis (resp. y-axis). 
A {\em saddle connection} is a straight segment joining singularities
or punctures,
with no singularities or punctures in its interior. The set of all
saddle connections in direction $\theta$ is denoted by $\LL_M(\theta)$. 

Throughout this paper we will identify elements of $G$ with matrices
of $\SL(2,\R)$. We will need the
following four one-parameter subgroups of $G$: 
\[
\begin{split}
g_t & = \left( \begin{array}{cc}
e^{t/2} & 0 \\ 0 & e^{-t/2} \end{array} \right), \ \ \ r_{\theta}=
\left(\begin{array}{cc}
\cos \theta & -\sin \theta \\
\sin \theta & \cos \theta
\end{array}
\right),\\
h_s & = \left(\begin{array}{cc} 1 & s \\ 0 & 1 
\end{array}
\right),
 \ \ \ \ \ \ \ \  \ \, \ \til h_s = \left(\begin{array}{cc} 1 & 0 \\ s & 1 
\end{array}
\right).
\end{split}
\]
We say that $\lambda_1, \ldots, \lambda_k \in \R$ are {\em commensurable} if
$\lambda_i/\lambda_j \in \Q$ for all $i,j \in \{1, \ldots, k\}$. If this
holds, we denote by $\LCM(\lambda_1, \ldots, \lambda_k)$ the smallest
positive number which is an integer multiple of all the $\lambda_i$.

\subsection{Cylinder decompositions, gluing patterns}
A {\em cylinder} for $M$ is a topological annulus which is
isometric to $\R /w
\Z \times (0,h)$. It is {\em maximal} if it is not contained in a
larger cylinder, and this implies that both of its boundary components
in $S$ contain singularities. The 
{\em height, width}, and {\em inverse modulus} of the
cylinder are $h, w$, and $w/h$ respectively, and a curve which wraps around the cylinder
parallel to its boundary is called a {\em waist curve}. 
A {\em cylinder decomposition} is a decomposition of $M$ into maximal
cylinders with disjoint interiors. The waist curves of
all the cylinders in a cylinder decomposition are parallel. Two
cylinder decompositions of $M$ are called {\em transverse} if the
directions of waist curves in each decomposition are different. 

\subsubsection{Gluing pattern for a pair of cylinder decompositions}
Suppose for $i=1,2$ that we have two transverse
cylinder decompositions $M = C^{(i)}_1 \cup \cdots \cup
C^{(i)}_{m_i}$. For each $i \in \{1, \ldots, m_1\}, j \in \{1, \ldots,
m_2\},$ the intersection $C^{(1)}_i \cap C^{(2)}_j$ consists of a finite union of
parallelograms. The resulting parallelograms have disjoint interiors
and boundary identifications. To describe how the parallelograms are
attached to each other we follow 
ideas of \cite{EO1}.

Suppose first that $M$ is a translation surface. In this case one can
consistently label the edges of the
parallelograms with labels right, left, top, bottom. Label the
parallelograms by $1, \ldots, \ell$, let $S_{\ell}$ denote the
group of permutations on $\ell$ elements, and define $\sigma_1, \sigma_2
\in S_\ell$, where $\sigma_1(k_1)=k_2$ (resp. $\sigma_2(k_1)=k_2$) if the
right (resp. top) edge of the $k_1$th parallelogram is attached to
the left (resp. bottom) edge of the $k_2$th parallelogram. 

Replacing the labelling of the parallelograms amounts to replacing
$\sigma_1, \sigma_2$ with $\tau \sigma_1 \tau^{-1}, \tau \sigma_2
\tau^{-1}$ for some $\tau \in S_\ell$. We denote the equivalence class
of $(\sigma_1, \sigma_2) \in S_\ell \times S_\ell$ under simultaneous conjugations by
$[(\sigma_1, \sigma_2)]$, and call $[(\sigma_1, \sigma_2)]$ the {\em
gluing pattern} of $M$ and the given cylinder decompositions.
From the
gluing pattern it is simple to recover the number of cylinders in each
decomposition, the number of connected components in each
intersection of cylinders, and the stratum to which $M$
belongs. In particular the following hold:
\begin{itemize}
\item[(i)]
For $i=1,2$, $\sigma_i$ has $m_i$ cycles, and the length of the
$j$th cycle of $\sigma_1$ (resp. $\sigma_2$) is $\sum_i a_{ij}$ (resp.
$\sum_i a_{ji}$).
\item[(ii)]
The subgroup of $S_\ell$ generated by $\sigma_1, \sigma_2 $ acts
transitively on $\{1, \ldots, 
\ell \}.$ 
\end{itemize}

Moreover 
$M$ is completely determined by the dimensions and
orientation of the parallelogram and the corresponding gluing pattern;
indeed such information gives rise to an explicit atlas of
charts as in \S2.1. 

In case $M$ is a half-translation surface one can define a gluing pattern in a
more complicated way by subdividing each rectangle into four
`quarter-tiles'. 
This will not
be used in the current paper.

\ignore{
In case $M$ is a general flat surface (i.e. not necessarily a
translation surface), one subdivides each
parallelogram into four `quarter-tiles' and defines the gluing pattern
on the resulting $4\ell$ parallelograms using 4 involutions 
$\sigma_i \in S_{4\ell}, \, i=1, \ldots, 4$. The permutations
correspond respectively to the operations of attaching the
parallelograms horizontally, switching their right and left sides,
switching their top and bottom sides, and attaching them vertically. They satisfy 
\begin{itemize}
\item[(i')]
$\sigma_1 \sigma_2$ (resp. $\sigma_3 \sigma_4$) has $2m_i$ cycles, and
the cycle lengths of $\sigma_1 \sigma_2$ (resp. $\sigma_3 \sigma_4$)
is record the column sums $\sum_i a_{ij}$ (resp.
row sums $\sum_i a_{ji}$).
\item[(ii')]
The subgroup of $S_\ell$ generated by the $\sigma_i$ acts
transitively on $\{1, \ldots, 
4\ell \}.$ 
\end{itemize}
We denote by $[(\sigma_1, \sigma_2, \sigma_3, \sigma_4)]$ the
equivalence class of such involutions, under relabelling of the
parallelograms and switching labels of sides in each.
}

\subsubsection{Gluing pattern for parallelograms}
We will also have occasion to consider flat surfaces made up of
finitely many parallelograms, glued along edges. We suppose that $M$ is a
translation surface which is a union of
closed metric parallelograms $P_1, \ldots, P_m$, with disjoint interiors,
such that the two directions of parallel sides are
the same for all $P_i$. For ease of notation we assume that these
sides are horizontal and vertical. The interiors of the parallelograms
are isometrically embedded, so the singularities of $M$ lie on the
boundaries. 
The connected components of intersections $P_i
\cap P_j$ consist of horizontal and vertical segments, which we denote
respectively by $\xi_1, \ldots, \xi_{\ell_1}$ and $\eta_1, \ldots,
\eta_{\ell_2}$. We do not allow the singularities of $M$ to be in
the interior of a segment (if it is we subdivide the segment in
two). Each segment is attached to others at its two  
endpoints. If an endpoint of one of the $\eta_i$'s is a regular point of
$M$, attached to it are two of the $\eta_i$'s and at most two of the
$\xi_j$'s, and if it is a singularity more
segments will be attached. Orienting the $\xi$-edges from left to
right and the $\eta$-edges from top to bottom, we obtain a directed
graph with two kinds of edges, embedded in $M$ and equipped with
two additional structures:
\begin{itemize}
\item
At each vertex there is a cyclic order for
the incident edges, obtained 
by going around a small neighborhood of the point in $M$ in the
counterclockwise direction. Moreover along a cycle, consecutive edges
of the same kind have opposite orientations (i.e. if $\eta_i$ is
incoming and $\eta_j$ is the next $\eta$-edge then it is outgoing). 
\item
For each $\xi$ (resp. $\eta$) segment there are top and bottom
(resp. right and left) labels from $\{1, \ldots, m\}$ indicating the
parallelogram glued to the segment on the appropriate side. 
\end{itemize}
We call a graph with these structures a {\em gluing pattern} for a decomposition into
parallelograms. The gluing pattern obeys certain obvious
restrictions. For example, at a vertex $v$, if there is no $\xi$
edge between an incoming and an outgoing $\eta$ edge then the `right'
labels of the two $\eta$ edges are the 
same. Note that the gluing pattern determines the stratum
containing $M$. Since each vertex of a gluing pattern is either a
corner of a rectangle or a singularity of $M$, there are only finitely
many gluing patterns when the number of singularities and the number
of parallelograms are fixed. 

Also associated with a parallelogram decomposition is the {\em metric data}
consisting 
of the lengths of sides of the $P_i$'s and the lengths  of the
$\xi$- and $\eta$-edges. Given the gluing pattern and the lengths of
the edges, it is possible to calculate the sidelengths of the
$P_i$'s; thus the gluing pattern places some
restrictions on the metric information. It is clear that the gluing
pattern and the metric data taken together uniquely determine the
translation surface $M$. 

One could similarly define a gluing pattern for a parallelogram
decomposition of a half-translation surface. This will not be used in
the present paper and is left to the avid reader. 

\subsection{Fuchsian groups}\name{subsection: cones}
A Fuchsian group is a discrete subgroup of $G$. Fixing a haar measure
$\mu$ on $G$, we define the {\em covolume} of $\Gamma$ in $G$ as the measure
of a fundamental 
domain for the action of $\Gamma$ on $G$, and denote the covolume by
$\bar{\mu}(\Gamma)$. It is easily checked that if $M$
and $M'$ are affinely 
equivalent then $\bar{\mu}(\Gamma_M) = \bar{\mu}(\Gamma_{M'}).$ If
$\bar{\mu}(\Gamma_M) < \infty$ 
then $M$ is called a {\em lattice surface}.

An element of a Fuchsian group is called {\em parabolic} if it is
conjugate to $h_1$, and an automorphism $\varphi \in \Aff(M)$ is
called {\em parabolic} if 
$D\varphi \in \Gamma_M$ is parabolic.
The following is
well-known (see \cite{Veech - alternative}):
\begin{prop}
\name{prop: simple automorphisms}
If $\varphi \in \Aff(M)$ is parabolic and $D\varphi$ fixes the
direction $\theta$, then $M \sm \LL_M(\theta)$ is a cylinder
decomposition. For any $m$ there is $C = C_m$ such that if $m$ is the
number of cylinders in the above decomposition, then there is $k
\leq C_m$ such that $\psi = \varphi^k$ preserves the cylinders and the
corresponding inverse moduli $\mu_1, \ldots, \mu_m$ for $\psi$ satisfy 
$\LCM(\mu_1, \ldots, \mu_m) = \mu,$ where 
$
D\psi = r_\theta h_{\mu} r_{-\theta}$ and $n_i = \mu/\mu_i$ is the
number of Dehn twists which $\psi$ induces around a waist curve in
the $i$th cylinder. 
\end{prop}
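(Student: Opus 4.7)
The plan is to prove this in three stages: the cylinder decomposition, finding a bounded power of $\varphi$ preserving each cylinder and fixing the boundary singularities, and identifying the resulting shear with $\LCM(\mu_i)$.

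\textbf{Stage 1 (Cylinder decomposition).} Replace $M$ by $r_{-\theta}M$ to reduce to the case $\theta=0$, so that $D\varphi=h_\mu$. The key point is to show that the horizontal foliation is completely periodic. Since $h_\mu$ preserves horizontal lengths, $\varphi$ acts length-preservingly on the set $\LL_M(0)$ of horizontal saddle connections; combined with the finiteness of saddle connections of any bounded length in a fixed direction, every $\varphi$-orbit in $\LL_M(0)$ is finite. The classical dichotomy for directions stabilized by a Veech-group parabolic then rules out non-closed, non-saddle-connection horizontal leaves: such a recurrent non-periodic leaf would force the horizontal flow to be uniquely ergodic, contradicting the existence of a nontrivial parabolic fixing this direction. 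Hence $M\setminus\LL_M(0)=C_1\cup\cdots\cup C_m$ is a cylinder decomposition.

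\textbf{Stage 2 (Bounded power preserving each cylinder and fixing boundary singularities).} $\varphi$ permutes the $m$ cylinders, giving an element of $S_m$ of order at most $m!$; hence some $\varphi^{k_1}$ with $k_1\leq m!$ preserves each $C_i$ setwise. Identifying $C_i$ with $\R/w_i\Z\times[0,h_i]$, the restriction of $\varphi^{k_1}$ is affine with linear part $h_{k_1\mu}$, hence of the form $(x,y)\mapsto(x+k_1\mu\,y+a_i,y)$, and permutes the finite set of boundary singularities. A further bounded power of $\varphi$ (whose size depends on $m$ and the stratum) fixes every boundary singularity; the resulting $\psi=\varphi^k$ acts on each $C_i$ as a pure multi-twist $(x,y)\mapsto(x+n_i\mu_i\,y,y)$ with $n_i\in\Z_{\geq0}$ and $\mu_i=w_i/h_i$. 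The relation $k\mu=n_i\mu_i$ for every $i$ shows that the $\mu_i$ are commensurable and $k\mu$ is a common multiple.

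\textbf{Stage 3 (Normalization $\mu=\LCM(\mu_i)$).} The subgroup of $\Aff(M)$ consisting of pure multi-twist maps preserving direction $0$ maps under $D$ isomorphically (modulo a finite kernel of cylinder symmetries) to a cyclic subgroup of parabolics: any assignment of non-negative integer twist counts $(n_i)$ with $n_i\mu_i$ a common constant $c$ assembles to a well-defined element of $\Aff(M)$, and the minimal positive common value $c$ is precisely $\LCM(\mu_i)$. Thus the $D$-image of the parabolic direction-stabilizer is $\langle h_{\LCM(\mu_i)}\rangle$. Selecting $\psi=\varphi^k$ with $k$ chosen so that $D\psi$ realizes the generator yields $\mu=\LCM(\mu_i)$, and $n_i=\mu/\mu_i$ then reads off directly. (This last choice tacitly assumes $\varphi$ is primitive in its cyclic parabolic stabilizer; otherwise $\psi$ is to be interpreted as the primitive generator of this cyclic group, which differs from a power of $\varphi$ by a bounded index.)

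\textbf{Main obstacle.} The hardest step is Stage 1 — establishing complete periodicity of the horizontal foliation — which relies on the Veech dichotomy for parabolic-fixed directions. The remainder is essentially linear algebra on each cylinder, with the bound $C_m$ arising from the order of the $S_m$-permutation and the finiteness of boundary singularities to be fixed.
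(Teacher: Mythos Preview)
The paper does not give its own proof of this proposition; it is stated as ``well-known'' with a citation to Veech's paper \cite{Veech - alternative}. So there is no in-paper argument to compare against, and I will comment on your attempt directly.

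Your Stage~1 contains a genuine gap. The sentence ``such a recurrent non-periodic leaf would force the horizontal flow to be uniquely ergodic, contradicting the existence of a nontrivial parabolic fixing this direction'' does not stand up. First, a minimal component need not be uniquely ergodic, so the implication ``recurrent non-periodic $\Rightarrow$ uniquely ergodic'' is false in general. Second, even granting unique ergodicity, you give no reason why this is incompatible with a horizontal parabolic. Invoking ``the classical dichotomy for directions stabilized by a Veech-group parabolic'' is circular: that a parabolic-fixed direction is completely periodic is exactly the first assertion of the proposition, and the stronger Veech dichotomy applies only to lattice surfaces, which is not assumed here. The correct argument (from \cite{Veech - alternative}) is more hands-on: one shows that on a putative minimal component $N$, a power of $\varphi$ preserves $N$, commutes with the horizontal flow, and preserves the transverse measure $|dy|$; this forces $\varphi|_N$ to be trivial in the mapping class group of $N$, which in turn forces $\varphi^*$ to fix the class $[dx]$ in cohomology --- but $\varphi^*[dx]=[dx]+\mu[dy]$ with $\mu\neq 0$ and $[dy]\neq 0$, a contradiction. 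Alternatively one argues via Thurston's classification that a parabolic affine automorphism must be a multi-twist.

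Stages~2 and~3 are sound in outline, but you yourself flag the two soft spots. In Stage~2 your bound depends on both $m$ and the stratum, whereas the proposition asserts a $C_m$ depending on $m$ alone. In Stage~3 your parenthetical concedes that when $\varphi$ is not primitive in the parabolic stabilizer, the $\psi$ with shear exactly $\LCM(\mu_i)$ need not be a power of $\varphi$; but the proposition explicitly requires $\psi=\varphi^k$. Neither issue is fatal to the essential content used later in the paper, but both would need tightening to match the proposition as stated.
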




For a Fuchsian group $\Gamma$ we now define cusp
areas and cone areas. An infinite cyclic subgroup of $G$
generated by a parabolic element is called {\em parabolic}.
Suppose that $\Gamma$ contains a parabolic subgroup $P$ and is {\em 
non-elementary}, that is,  
not a finite extension of an abelian group. Suppose also that $P$ is
{\em maximal}, i.e.\ not properly 
contained in a parabolic subgroup of $\Gamma$. Choose an element $g
\in G$ such that 
\eq{eq: prop of g}{
gPg^{-1} = \left \langle h_1  \right \rangle,
}
 and relabelling, replace $P$ and $\Gamma$ by
$gPg^{-1}$ and $g\Gamma g^{-1}$ respectively. Let $\HH$ be the complex upper half
plane, let $\ii = \sqrt{-1}$, let $\mathcal{C}_t$ be the image of the disk $\left\{z \in \HH:
\left|z-\frac{t}{2}\ii \right | < \frac{t}{2} \right\}$ in $\HH/P$, let
$\varphi: \HH/P \to 
\HH/\Gamma$ be the natural map, and let 
\eq{eq: defn t0}{
t_0=t_0(\Gamma, P) = \sup \{t>0 : \varphi|_{\mathcal{C}_t} \mathrm{\ is \ injective }
\}.
}
We will show in Propositions
\ref{prop: cusp}, \ref{prop: conjugation} that the set in the right
hand side of \equ{eq: defn 
t0} is nonempty and bounded above, so that $t_0$ is well-defined, and
that $t_0$ does not
depend on the choice of $g$ in \equ{eq: prop of g} and satisfies $t_0(\Gamma,
P) = t_0(x\Gamma x^{-1}, xPx^{-1})$ for $x \in G$. We call $t_0$ the
{\em cusp area} of $P$ in $\Gamma$; a simple computation shows that it
is equal to the hyperbolic area of $\mathcal{C}_{t_0}$.


\subsection{Orientation double cover}
Suppose $M$ is a half-translation surface. Then
there is a topological branched cover $\pi: \til S \to S$ of degree 2
such that the pulled-back flat surface $\til M$ is a translation surface called 
the {\em orientation double cover} of $M$, 
see \cite{DH}.
We have:
\begin{prop}
\name{prop: double cover}
Let $\til M \to M$ be the orientation double cover of a
half-translation surface $M$. Then:
\begin{itemize}
\item
Any automorphism of $M$ lifts to an automorphism
of $\til M,$ with the same derivative, so that $\Gamma_M \subset
\Gamma_{\til M}$. If a parabolic
(resp. hyperbolic) affine automorphism $\varphi$ of $M$ has an
associated cylinder decomposition (resp. Markov partition) with $k$
cylinders (resp. parallelograms) then the lifted automorphism $\til
\varphi$ of $\til M$ has an associated cylinder decomposition
(resp. Markov partition) with at most $2k$ 
cylinders (resp. parallelograms).
\item
Taking orientation double covers is $G$-equivariant, i.e. for $g \in
G$, $g \til M$
is the orientation double cover of $gM$. 
\item
Given $\til M$ there are at most finitely
many $M$ such that $\til M$ is the orientation double cover of $M$. 
\ignore{
\item
$\Gamma_M$ is contained in $\Gamma_{\til M}$ as a subgroup of finite
index, where the index is bounded above by number depending only on the
stratum containing $M$.  \combarak{is this needed? If not, remove, and
shorten proof}.}
\end{itemize}
\end{prop}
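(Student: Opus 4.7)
\medskip
\noindent\textbf{Proof plan.} The strategy is to use the characterization of $\til M$ as the canonical (minimal) connected double cover on which the $\pm 1$ holonomy of the quadratic differential becomes trivial, equivalently the cover on which the horizontal foliation becomes orientable. This canonical description immediately gives functoriality with respect to affine maps.

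For the first assertion, let $\varphi \in \Aff(M)$. In each local chart $\varphi$ acts by $z \mapsto D\varphi \cdot z + c$, which is a diffeomorphism preserving (up to sign) the quadratic differential. Hence $\varphi$ preserves the holonomy representation $\pi_1(M \sm \Sigma) \to \{\pm 1\}$ defining $\til M$, so it lifts to $\til \varphi \in \Aff(\til M)$. In any chart of $\til M$ lying over a chart of $M$, the lift $\til \varphi$ is given by the same affine formula, so $D\til\varphi = D\varphi$, yielding $\Gamma_M \subset \Gamma_{\til M}$. To count cylinders and rectangles, pull the cylinder decomposition (resp. Markov partition) of $M$ back along $\pi$. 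A parallelogram $P \subset M \sm \Sigma$ is simply connected, so $\pi^{-1}(P)$ consists of exactly two isometric copies, giving at most $2k$ parallelograms in the lifted Markov partition. A cylinder $C$ has $\pi_1(C) \cong \Z$, and $\pi^{-1}(C)$ is either a trivial double cover (two cylinders of the same width) or a connected double cover (one cylinder of doubled waist-curve length), so again we get at most $2k$ cylinders.

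For $G$-equivariance, the action of $g \in G$ on $M$ is by post-composition $z \mapsto gz$ on charts; it does not alter the underlying topological branched cover nor the holonomy class it trivializes, and the lifted charts transform by the same $g$. Thus $g\til M$ is canonically identified with the orientation double cover of $gM$.

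For the finiteness in the last bullet, observe that any $M$ with orientation double cover $\til M$ is realized as a quotient $\til M/\langle \tau\rangle$ for some affine involution $\tau \in \Aff(\til M)$ which is the nontrivial deck transformation; in charts this involution is $z \mapsto -z$, so $D\tau = -I$. Distinct such $\tau$ yield distinct half-translation quotients $M$, so it suffices to bound the cardinality of $D^{-1}(-I) \cap \Aff(\til M)$. But this set is either empty or a coset of $\ker\bigl(D\colon \Aff(\til M) \to G\bigr)$, and the kernel is finite: elements of $\ker D$ act on charts as translations and hence permute the nonempty finite set $\Sigma_{\til M}$, so $\ker D$ embeds in a finite symmetric group. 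The main subtlety is making the cylinder-lifting dichotomy rigorous (and checking that the lifted Markov partition really is a Markov partition in the sense of the appendix), since one must verify that $\pi^{-1}$ of a parallelogram covering sends Markov rectangles to Markov rectangles under $\til \varphi$; this follows from naturality of the construction together with $D\til\varphi = D\varphi$.
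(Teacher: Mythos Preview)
Your proposal is correct and in fact supplies the arguments that the paper outsources to citations: the paper's own proof simply refers to \cite{Rykken} for the lifting of automorphisms, observes that the cylinder and rectangle bounds follow from the degree of the cover being $2$, declares $G$-equivariance immediate from the construction, and cites \cite{Vorobets} for the finiteness assertion. Your route via the holonomy characterization of $\til M$ and the identification of half-translation quotients with involutions $\tau \in \Aff(\til M)$ satisfying $D\tau = -I$ is the standard way to flesh these references out, and your cylinder dichotomy (trivial versus connected double cover of an annulus) is exactly what ``degree $2$'' is hiding.

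One small imprecision in the third bullet: the claim that $\ker D$ \emph{embeds} in the symmetric group on $\Sigma_{\til M}$ via the permutation action is not literally correct, since a translation automorphism fixing every singularity can still act nontrivially by rotating the cone at a singular point (permuting the separatrices there). The action on the finite set of horizontal separatrices, rather than on $\Sigma_{\til M}$ itself, is faithful; alternatively you can simply invoke the finiteness of $\ker D$ as recorded in the introduction of the paper. Either way the conclusion stands.
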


\begin{proof}
A proof of the fact that any affine automorphism lifts can be found in
\cite{Rykken}. It 
implies that there is an inclusion $\Gamma_M \subset \Gamma_{\til
M}$. 
The statement about the number of cylinders and rectangles follows
immediately from the fact that the degree of the cover is $2$. The
second statement is immediate from the construction of the orientation
double cover. 
For the third statement see e.g. \cite{Vorobets}. 
\end{proof}

\subsection{Perron-Frobenius}
A matrix $A  \in \Mat_d(\R)$ with all entries non-negative is called
{\em non-negative}, and a non-negative matrix is called {\em
irreducible} if 
for some $k$, $A^k$ has all its entries strictly positive. A vector
$\vec{v} \in \R^d$ is called {\em positive} if all its entries are
strictly positive.

We will need the following classical result (see e.g. \cite{Gantmacher}).
\begin{prop}
\name{prop: Perron Frobenius}
Suppose $A
\in \Mat_d(\R)
$ is an irreducible non-negative matrix. Then:
\begin{itemize}
\item
There is a unique (up to scaling) positive eigenvector $v^+$ of $A$,
and the corresponding eigenspace is one-dimensional.
\item
Let $\lambda$ be the eigenvalue for which $Av^+ = \lambda
v^+$. Then for any other eigenvalue $\beta$ of $A$, $|\beta| < \lambda.$ 

\end{itemize}
\end{prop}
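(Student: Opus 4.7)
The plan is to reduce to the case $A > 0$ (all entries strictly positive) and then leverage the hypothesis that $A^k > 0$ for some $k$.

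Stage one: prove the proposition when $A$ is strictly positive. The map $v \mapsto Av/\|Av\|_1$ is continuous on the standard simplex $\Delta \subset \R^d$ and sends $\Delta$ into itself, so Brouwer's fixed point theorem gives $v^+ \in \Delta$ with $Av^+ = \lambda v^+$ and $\lambda = \|Av^+\|_1 > 0$; strict positivity of $A$ then forces $v^+ = Av^+/\lambda$ to be strictly positive. Applying the same argument to $A^T$ yields a strictly positive left eigenvector $u$, and pairing $Av^+ = \lambda v^+$ with $u$ shows the two eigenvalues coincide. For an arbitrary eigenvalue $\beta \in \C$ with eigenvector $x \in \C^d$, the coordinatewise triangle inequality gives $|\beta|\,|x| \le A|x|$; pairing with $u$ yields $|\beta| \le \lambda$. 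If equality holds, then $A|x| = |\beta||x|$, and a Collatz--Wielandt type comparison combined with strict positivity of $A$ forces $|x|$ to be a positive multiple of $v^+$; the equality case of the triangle inequality then shows that $x$ is a complex scalar times $v^+$, whence $\beta = \lambda$. The same reasoning applied to any other positive eigenvector gives uniqueness up to scale.

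Stage two: reduce the general irreducible case to Stage one. By hypothesis $B := A^k > 0$ for some $k \ge 1$, and Stage one applied to $B$ produces a unique positive eigenvector $v^+$ with eigenvalue $\mu > 0$ satisfying $|\beta'| < \mu$ for every other eigenvalue $\beta'$ of $B$. Set $w := Av^+ \ge 0$; then $Bw = A(Bv^+) = \mu w$, and applying $B$ once more shows $w$ is strictly positive. Uniqueness of the positive eigenspace of $B$ gives $w = \lambda v^+$ for some $\lambda > 0$, so $Av^+ = \lambda v^+$ with $\lambda^k = \mu$, establishing the first bullet. For the second bullet, any eigenvalue $\beta$ of $A$ yields an eigenvalue $\beta^k$ of $B$, so $|\beta|^k \le \mu = \lambda^k$. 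If $|\beta| = \lambda$ and $\beta \ne \lambda$, then $\beta^k = \mu$, so a $\beta$-eigenvector $x$ of $A$ lies in the one-dimensional $\mu$-eigenspace of $B$ and is thus proportional to $v^+$; but then $Ax = \lambda x$, contradicting $\beta \ne \lambda$. Hence $|\beta| < \lambda$ whenever $\beta \ne \lambda$.

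The main obstacle is the upgrade from $|\beta|^k \le \lambda^k$ to the strict inequality $|\beta| < \lambda$ for $\beta \ne \lambda$, since a priori $\beta$ and $\lambda$ could have the same modulus but different arguments. This is where the geometric simplicity of the Perron eigenvalue of $B = A^k$ is essential, and it is precisely the reason the hypothesis $A^k > 0$ (rather than a weaker irreducibility assumption) is required for the statement as worded.
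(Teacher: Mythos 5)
Your proof is correct. Note, though, that the paper does not actually prove this proposition: it simply refers the reader to Gantmacher's textbook, so there is no in-paper argument to compare against. Your two-stage argument --- Brouwer on the simplex plus a left eigenvector and the triangle-inequality equality case for the strictly positive case, then reduction to $B=A^k>0$ for the general case --- is the standard and most self-contained route, and the details you compress (e.g.\ the Collatz--Wielandt comparison showing that $A\vert x\vert=\lambda\vert x\vert$ with $\vert x\vert\neq0$ forces $\vert x\vert$ to be a positive multiple of $v^+$) are routine to expand. You have also correctly identified the one genuine subtlety here, namely that the paper's definition of ``irreducible'' (some power of $A$ is entrywise strictly positive) is what is usually called \emph{primitive}, which is exactly what makes the strict inequality $\vert\beta\vert<\lambda$ true; under the weaker graph-theoretic notion of irreducibility the conclusion would fail (permutation matrices, cyclic adjacency matrices). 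One small omission in Stage two: when you write ``establishing the first bullet'' you have at that point only produced the positive eigenvector $v^+$ for $A$; you should also say explicitly that any positive eigenvector of $A$ is a positive eigenvector of $B$, hence a multiple of $v^+$ by Stage one, and that the $\lambda$-eigenspace of $A$ sits inside the one-dimensional $\mu$-eigenspace of $B$ and is therefore itself one-dimensional. Both remarks are immediate, but they are what the first bullet actually asserts.
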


We denote the eigenvalue $\lambda$ of Proposition \ref{prop: Perron
Frobenius} by $\lambda(A)$.
We will be particularly interested in irreducible non-negative matrices with integer entries. 

\begin{prop}
\name{prop: PF finiteness}
Given $T>0$ and $d \in \N$, the set  
$$\{A \in \Mat_d(\Z) : A \text{ {\rm is  irreducible  non-negative,} }
\lambda(A) < T \}$$
is finite.

\end{prop}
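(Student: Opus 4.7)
The plan is to show that all entries of $A$ are bounded above by a constant depending only on $d$ and $T$; since entries are non-negative integers, finiteness follows immediately.

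First I would observe that $\lambda(A) \geq 1$ for any irreducible non-negative integer matrix. Indeed, irreducibility forces each row to contain a positive entry (otherwise the corresponding index is a sink in the associated directed graph), hence each row sum is a positive integer. The Perron eigenvalue dominates the minimum row sum, yielding $\lambda(A) \geq 1$. Next, for each $k \geq 1$, Proposition \ref{prop: Perron Frobenius} gives $|\mu| \leq \lambda(A)$ for every eigenvalue $\mu$ of $A$, so
\[
\mathrm{tr}(A^k) \;=\; \sum_{i=1}^{d} \mu_i^k \;\leq\; d\, \lambda(A)^k \;\leq\; d\, T^k.
\]
Since $A$ is non-negative, each diagonal entry $(A^k)_{ii}$ is bounded by $\mathrm{tr}(A^k)$.

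Now fix indices $i,j$ with $a_{ij} \geq 1$. If $i=j$, then $a_{ii} \leq \mathrm{tr}(A) \leq dT$. Otherwise, because the directed graph on $\{1,\ldots,d\}$ with an edge $i'\to j'$ whenever $a_{i'j'}>0$ is strongly connected (this is equivalent to irreducibility), there is a directed path $j = v_0 \to v_1 \to \cdots \to v_m = i$ of length $m \leq d-1$. Concatenating the edge $i \to j$ with this path yields a closed walk of length $m+1$ at $i$ whose weight $a_{ij}\cdot a_{v_0 v_1}\cdots a_{v_{m-1} v_m}$ is at least $a_{ij}$ (the remaining factors being positive integers). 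Hence
\[
a_{ij} \;\leq\; (A^{m+1})_{ii} \;\leq\; \mathrm{tr}(A^{m+1}) \;\leq\; d\, T^{m+1} \;\leq\; d\, T^{d},
\]
using $\lambda(A)\geq 1$ in the last step. This bounds every entry of $A$ by $d T^d$, leaving only finitely many possibilities.

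There is no real obstacle here; the only point requiring a moment of care is recording that $\lambda(A) \geq 1$ (so that $T^{m+1}$ can be bounded by $T^d$), and that the short-path bound $m \leq d-1$ really comes from strong connectivity of a graph on $d$ vertices rather than from anything deeper. The whole argument is a combination of the Perron bound on $\mathrm{tr}(A^k)$, the combinatorial fact that in a strongly connected graph on $d$ vertices any two vertices are joined by a path of length at most $d-1$, and the integrality hypothesis, which converts a bound on diagonal entries of powers into a bound on individual entries of $A$.
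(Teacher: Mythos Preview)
Your argument is correct. The paper does not supply its own proof but simply refers the reader to \cite[Proof of Theorem 6]{PP}; your write-up is a self-contained version of essentially the same idea (bounding traces of powers via the Perron eigenvalue and then extracting entry bounds from closed walks in the associated digraph). Two small remarks: first, the paper's Proposition~\ref{prop: Perron Frobenius} is stated for what is usually called a \emph{primitive} matrix (some power strictly positive), and primitivity certainly implies the strong connectivity you use for the short-path bound; second, your use of $\lambda(A)\geq 1$ is really serving to note that the set is empty when $T\leq 1$, so that $T^{m+1}\leq T^{d}$ is legitimate in the remaining case---you say this, but it is worth making the logic explicit.
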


\begin{proof}
See \cite[Proof of Theorem 6]{PP}. 
\end{proof}

\section{Some hyperbolic geometry}
This section contains some simple propositions in hyperbolic geometry
related to cusp areas and cone areas. We will use the notation introduced in \S
2.3.

\begin{prop}
\name{prop: cusp}
Let $\Gamma$ be a Fuchsian group containing the
maximal parabolic subgroup $P =
\left \langle 
h_1
\right \rangle.
$ Then 
$X = \{t >0 : \varphi|_{\mathcal{C}_t} \mathrm{\ is \ injective }
\}$ is nonempty. 
If $\Gamma$ is nonelementary then $t_0 = \sup X <
\infty, $ and there is $\gamma_0 \in \Gamma$ which is a conjugate of
$h_1$, such that for some $s \in \R$, 
$h_s^{-1} \gamma_0 h_s = \til h_{-t_0^2}.$
The hyperbolic area of $\mathcal{C}_t$ is $t_0$.
\end{prop}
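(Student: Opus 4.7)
The plan is to reduce everything to Shimizu's lemma plus an explicit Euclidean tangency calculation for horoballs at $0 \in \partial\HH$. Set $D_t := \{z \in \HH : |z - (t/2)\ii| < t/2\}$. I would first observe that the conjugating element $g$ with $gPg^{-1} = \langle h_1\rangle$ is determined only modulo the normalizer of $\langle h_1\rangle$; by composing with an $h_s$ for generic real $s$, I may assume in addition that $0$ is not a parabolic fixed point of $\Gamma$. Then for $t > 0$ small, $D_t$ sits in a relatively compact region of $\HH/\Gamma$ away from the cusps, so by discreteness of $\Gamma$ no $\gamma \in \Gamma \setminus P$ can send a point of $D_t$ back into $D_t$, giving that $\varphi|_{\mathcal{C}_t}$ is injective. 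Hence $X \neq \varnothing$.

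Next, for $\Gamma$ non-elementary, to show $t_0 < \infty$: any $\gamma = \bigl(\begin{smallmatrix}a & b \\ c & d\end{smallmatrix}\bigr) \in \Gamma \setminus P$ has $c \neq 0$ since $\gamma$ does not fix $\infty$, and Shimizu's lemma applied to $h_1 \in \Gamma$ gives $|c| \geq 1$. An explicit formula for the Euclidean radius of $\gamma(D_t)$ as a horoball at $\gamma(0)$ shows $\gamma(D_t) \cap D_t \neq \varnothing$ once $t$ is large enough, forcing $t_0 < \infty$. Set $c^* := \min\{|c_\gamma| : \gamma \in \Gamma \setminus P\}$, which is attained by Shimizu together with discreteness; the claim will be that $t_0 = c^*$. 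Picking $\gamma_1 \in \Gamma \setminus P$ achieving this minimum and defining
\[
\gamma_0 := \gamma_1 h_1 \gamma_1^{-1} = \begin{pmatrix} 1-ac & a^2 \\ -c^2 & 1+ac \end{pmatrix},
\]
we obtain a parabolic element of $\Gamma$ (trace $2$) fixing $s := a/c$; a short matrix check gives $h_s^{-1}\gamma_0 h_s = \til h_{-c^2}$, so setting $t_0 := |c| = c^*$ yields the required identity $h_s^{-1}\gamma_0 h_s = \til h_{-t_0^2}$.

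The main obstacle is the identification $t_0 = c^*$. For the upper bound $t_0 \leq c^*$, I would verify via an elementary Euclidean computation (comparing distances between centers with sums of radii) that $\gamma_1(D_{c^*})$ is tangent to $D_{c^*}$, so injectivity fails for $t$ just past $c^*$. For the reverse inequality $t_0 \geq c^*$, the Euclidean tangency threshold of $\gamma(D_t) \cap D_t \neq \varnothing$ for $\gamma \in \Gamma \setminus P$ must be shown to be a monotone function of $|c_\gamma|$; combined with the Shimizu bound $|c_\gamma| \geq c^*$, this rules out any overlap for $t < c^*$. Granted the identification $t_0 = c^*$, the hyperbolic area of $\mathcal{C}_{t_0}$ is $t_0$ by a direct integration, after using $\gamma_0$ to identify the image of $\mathcal{C}_{t_0}$ in $\HH/\Gamma$ with the embedded cusp neighborhood $\{y > 1/t_0\}/P$ at $\infty$, of area $\int_{1/t_0}^{\infty}\!\int_0^1 y^{-2}\, dx\,dy = t_0$.
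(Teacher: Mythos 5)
Your plan reduces to Shimizu's lemma plus a tangency computation, which is a genuinely different route from the paper's (the paper never invokes Shimizu; it produces the tangency element directly by a proper-discontinuity argument along a compact segment of the horocycle $H_1$, and then reads off the form of $\gamma_0$ from the geometry of the tangency point). That alternative is workable in principle, and your matrix identity $h_{s}^{-1}(\gamma_1 h_1\gamma_1^{-1})h_s = \til h_{-c^2}$ with $s=a/c$ is correct. But there is a genuine gap, and it comes from a convention error that undermines the first part of the argument.

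The paper uses the \emph{right} action $z\cdot\gamma = (az+c)/(bz+d)$, under which $h_1$ fixes $0$ and $D_t$ is a $P$-invariant horoball at $0$, so $\mathcal{C}_t \subset \HH/P$ makes sense. Your first step --- ``by composing with an $h_s$ for generic $s$, I may assume $0$ is not a parabolic fixed point of $\Gamma$'' --- cannot be arranged: $0$ \emph{is} the parabolic fixed point of $P$, and $h_s$ normalizes $\langle h_1\rangle$ exactly because it fixes $0$, so this normalization changes nothing. You appear to be working in the standard left action, where $h_1$ fixes $\infty$; but then $D_t$ (a horoball at $0$) is not $P$-invariant and the statement is incoherent. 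Either way, the subsequent claim that for small $t$ the set $D_t$ ``sits in a relatively compact region of $\HH/\Gamma$ away from the cusps'' is false: $D_t$ is a cusp neighborhood and its projection to $\HH/\Gamma$ is non-compact for every $t>0$. So the argument that $X\neq\varnothing$ collapses. Showing $X \neq \varnothing$ is not a compactness argument about $D_t$; one has to argue along the bounding horocycle $H_1$, using that a compact sub-segment $S$ meets only finitely many translates $S\cdot\gamma$ with $0\cdot\gamma\neq 0$, then push $t$ down until $\mathcal{B}_t$ misses the finitely many corresponding $\mathcal{B}_t\cdot\gamma_i$. This is what the paper does.

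Two further points, less serious but worth fixing. First, the justification ``$c\neq 0$ since $\gamma$ does not fix $\infty$'' is again the wrong convention: under the right action the relevant condition is that $\gamma$ does not fix $0$, which gives $0\cdot\gamma = c/d \neq 0$, hence $c\neq 0$; the conclusion is the same but the reason is different. Second, the identification $t_0 = c^*$ and the monotonicity of the tangency threshold in $|c_\gamma|$ are asserted rather than proved, and the attainment of $c^*$ needs more than ``Shimizu plus discreteness'' (one should argue, e.g., that only finitely many cosets $P\gamma P$ contribute a $|c_\gamma|$ below any fixed bound). These computations do come out right --- after conjugating $0$ to $\infty$ by $w$, the horoball $D_t$ becomes $\{y>1/t\}$ and the tangency threshold with $\gamma$ is exactly $t=|c_\gamma|$, independent of $d$ --- but as written they are placeholders rather than a proof.
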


\begin{proof}
We denote the right-action of $\Gamma$ on $\HH$ by
$$
z \cdot \gamma = \frac{az+c}{bz+d}\, , \ \ \  \ \mathrm{where} \ \gamma = \left( \begin{matrix} a &
b \\ c & d
\end{matrix} \right) \in \Gamma.
$$
Let  $\mathcal{B}_t = 
\left \{z \in \HH: \left|z-\frac{t}{2}
\ii\right|<\frac{t}{2}\right\}$ and let $H_t = \partial
\, \mathcal{B}_t.
$
 By our conventions $\mathcal{B}_t$ projects to $\mathcal{C}_t$
and $H_t$  is the
horocycle based at 0 through $t\ii,$ which projects to a
periodic horocycle on $\HH/P$.  Let $S \subset H_1$ be a compact
segment whose projection to $\HH/P$ contains a full period of the
horocycle. Since the action of 
$\Gamma$ on $\HH$ is properly discontinuous, we can write
$$\{\gamma_1, \ldots, \gamma_N \} = \{\gamma \in \Gamma: S \cap S
\cdot \gamma \neq \varnothing, \, 0 \cdot \gamma \neq 0 \}.$$

To show that $X \neq \varnothing, $ take $0<t \leq 1$ small enough so
that $\mathcal{B}_t$ does not 
intersect $\bigcup_{i=1}^N \mathcal{B}_t \cdot \gamma_i$; such $t$ exist
because the $\mathcal{B}_t \cdot \gamma_i$ are not based at 0. Suppose if
possible that for some $\gamma \in \Gamma \sm P,$ $\mathcal{B}_t 
\cap \mathcal{B}_t \cdot \gamma \neq \varnothing.$ In particular
$\mathcal{B}_1 \cap \mathcal{B}_1 \cdot \gamma
\neq \varnothing$. It follows from discreteness of $\Gamma$ and
maximality of $P$ that $P$ is equal to its own normalizer in $\Gamma$.
This implies that $p = 0 \cdot 
\gamma \neq 0$; then $H_1 \cap H_1 \cdot \gamma \neq \varnothing$ (see
Figure 1) and since
any element of $H_1$ may be mapped to $S$ by suitable powers of
$h_1$, there are $j, k \in \Z$ such that $S \cap S \cdot \til \gamma \neq
\varnothing, $ where $\til \gamma = h_1^j \gamma h_1^k \in \Gamma.$ Thus for some
$i \in \{1, \ldots, N\}$ we have $\til \gamma = \gamma_i$ and
$\mathcal{B}_t \cap \mathcal{B}_t h_1^{-j} \gamma_i h_1^{-k} \neq
\varnothing.$ Since $\mathcal{B}_t$ is invariant under $h_1$, this
contradicts the definition of $t$. 

\begin{figure}[htp] \name{figure: horocycles}
\input{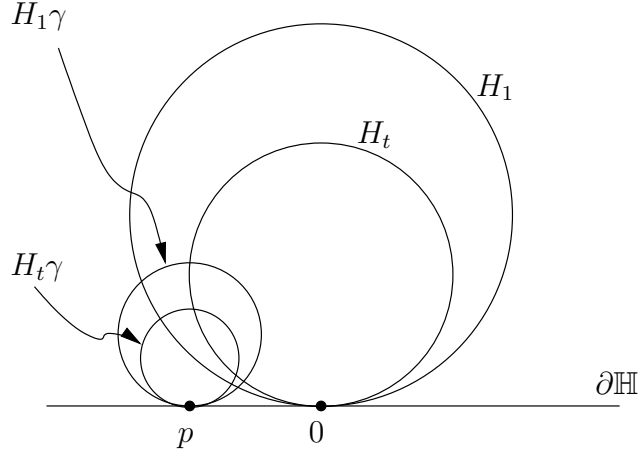}
\caption{Intersection of horocycles}
\end{figure}

Now if $\Gamma$ is nonelementary then it contains $\gamma$ so that
$0 \cdot \gamma \neq 0$, so that $H_1$ and $H_1 \cdot \gamma$ are horocycles
based at different points. Then for all sufficiently large $t$, $H_{t}
\cap H_{t} \cdot \gamma \neq \varnothing$, which shows
$\sup X < \infty.$ 
Further, by definition of $t_0$, there is $\gamma \in \Gamma$ such
that $H_{t_0} \cap H_{t_0} \cdot \gamma \neq
\varnothing$ and $\mathcal{B}_{t_0} \cap \mathcal{B}_{t_0} \cdot \gamma =
\varnothing,$ i.e., $H_{t_0}$ and $H_{t_0} \cdot \gamma$ are horocycles based
at different points and tangent to each other. We denote the point of
tangency by $p_0$. 

Since $p_0 \gamma^{-1} \in H_{t_0}$, there is $s_1$ such that 
$$p_0 = p_0 \cdot x, \ H_{t_0} \cdot x = H_{t_0} \cdot \gamma \ \ \
\mathrm{where} \ 
x=h_{s_1} \gamma.$$ 
We now express $x$ in terms of $t_0$, as follows. 

Let $s \in \R$ so that $\til p_0
= p_0 \cdot h_{s} = t_0 \ii$. The matrices 
\eq{eq: defn of a}{
a = \left( \begin{matrix} t_0^{-1/2} & 0 \\
0 & t_0^{1/2} \end{matrix} \right), \ \ \ \ \  
w = \left(\begin{matrix} 0 & -1 \\
1 & 0 \end{matrix} \right)
}
satisfy $\til p_0 \cdot a = \ii = \ii \cdot w,$  and $H_1 \cdot w = \til H$,
where $\til H = \{z \in \HH : \mathrm{Im} z = 1\} = \{\ii \cdot \til h_s : s \in
\R \}$ (see Figure 2). Thus the matrix
$x' = h_{s} a w a^{-1} h_{-s}$ also satisfies $p_0  = p_0 \cdot x', \,
H_{t_0} \cdot x' = H_{t_0} \cdot \gamma$, and this implies 
\eq{eq: form of x}{
x = \pm h_{s} a w a^{-1} h_{-s}.
}

\begin{figure}[htp] \name{figure: tangency}
\input{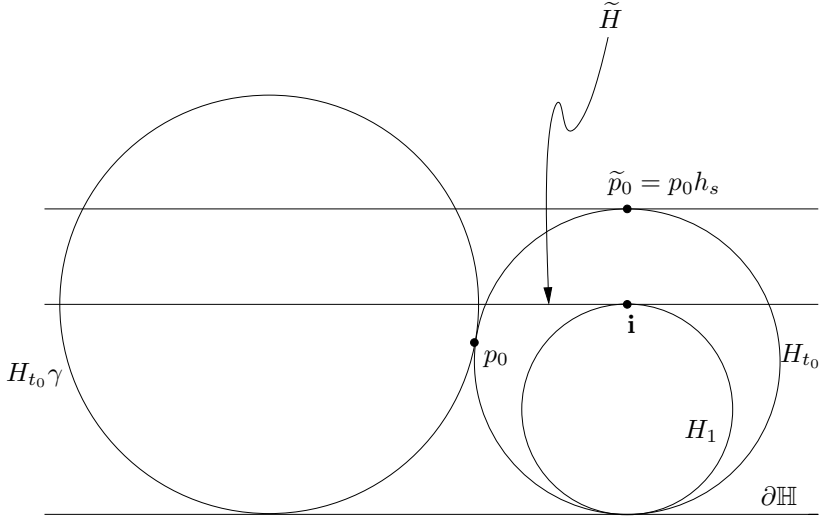}
\caption{Tangency of $H_{t_0}$ and $H_{t_0}\gamma$}
\end{figure}

Let $\gamma_0 =  x^{-1} h_1 x = \gamma^{-1} h_1 \gamma \in
\Gamma$. A computation using \equ{eq: form of x} gives  
\[
h_{s}^{-1} \gamma_0 h_{s}
=  \til h_{-t_0^2} .
\]

Finally, note that the hyperbolic area of $\mathcal{C}_{t_0}$ is the
same as that of $\mathcal{C}_{t_0} \cdot \gamma h_s$, which by the
above is covered bijectively by $\{z = x+\ii y \in \HH : y \geq t_0, \,
0 \leq x < t_0^2 \}$. Hence its area is
$$\int_0^{t_0^2} \int_{t_0}^{\infty} \frac{dy}{y^2} \, dx = 
t_0.$$
\end{proof}

\begin{prop}
\name{prop: conjugation}
The definition of $t_0(\Gamma, P)$ does not depend on the choice of
$g$ for which \equ{eq: prop of g} holds, and $t_0(x\Gamma x^{-1}, xPx^{-1}) =
t_0(\Gamma, P).$ 

\end{prop}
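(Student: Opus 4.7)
The plan is to reduce both claims to the observation that the normalizer $N = N_G(\langle h_1\rangle)$ acts on $\HH$ preserving each horoball $\mathcal{B}_t$ based at $\infty$.

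First I would identify $N$ explicitly. Any $n \in N$ must conjugate $h_1$ to some $h_1^k \in \langle h_1\rangle$; since conjugation sends a parabolic fixing $\infty$ to a parabolic fixing $n\cdot \infty$, we get $n \cdot \infty = \infty$, so $n$ is upper triangular. Writing $n = \pm\begin{pmatrix}a & b \\ 0 & a^{-1}\end{pmatrix}$, a direct computation gives $nh_1n^{-1} = h_{a^2}$; for this to lie in $\langle h_1\rangle$ we need $a^2 \in \Z$, but since $n^{-1}$ must also normalize, we in fact need $a^2 = \pm 1$, hence $a = \pm 1$ and $n = \pm h_s$ for some $s \in \R$. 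In particular every element of $N$ acts on $\HH$ as a translation $z \mapsto z \pm s$, and such translations preserve each $\mathcal{B}_t$ (hence each $\mathcal{C}_t \subset \HH/\langle h_1\rangle$).

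For the independence of the choice of $g$: if $g'$ also satisfies \equ{eq: prop of g}, then $n = g'g^{-1}$ lies in $N$, and the isomorphism $\HH/(g\Gamma g^{-1}) \to \HH/(g'\Gamma g'^{-1})$ induced by $n$ fits into a commutative diagram with the self-homeomorphism of $\HH/\langle h_1\rangle$ induced by $n$, which preserves each $\mathcal{C}_t$. Therefore the set $X$ defining $t_0$ in \equ{eq: defn t0} is the same whether computed via $g$ or via $g'$.

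For the conjugation invariance: given $x \in G$ and given a choice of $g$ for $(\Gamma, P)$, I would simply take $g' = gx^{-1}$ as the conjugating element for $(x\Gamma x^{-1}, xPx^{-1})$. A one-line check shows $g'(xPx^{-1})g'^{-1} = \langle h_1\rangle$ and $g'(x\Gamma x^{-1})g'^{-1} = g\Gamma g^{-1}$, so the two conjugated pairs are literally identical, whence the $t_0$ values agree. The only real content of the proposition is the normalizer computation, which is the first step; everything else is formal bookkeeping with conjugations.
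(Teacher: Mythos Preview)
Your proposal is correct and follows essentially the same approach as the paper: both arguments identify $g'g^{-1}$ as an element of the form $\pm h_s$ (the paper via the centralizer condition $xh_1x^{-1}=h_1$, you via the normalizer of $\langle h_1\rangle$, which coincide here) and use that such elements preserve each horoball/horocycle, then obtain the conjugation invariance by the formal substitution $g' = gx^{-1}$. Two minor corrections: in the paper's right-action convention $h_1$ fixes $0$ rather than $\infty$, so the horoballs $\mathcal{B}_t$ are based at $0$; and your ``$a^2=\pm 1$'' should read $a^2=1$, since $a$ is real.
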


\begin{proof}
Let the notation be as in the the preceding proof. It was shown in the
proof that 
$$t_0 = \inf \{t >0: H_{t} \cap \bigcup_{\gamma \in \Gamma \sm P} H_t
\gamma \neq \varnothing\};$$
if we replace $g$ by $g'$ also satisfying  \equ{eq: prop of g}, then
$x=g'g^{-1}$ satisfies $xh_1x^{-1} = h_1$ and hence $x=h_s$ for some
$s$. This implies that $H_t x = H_t$ for all $t$ and hence $H_t \cap
H_t \gamma \neq \varnothing$ is equivalent to $H_t \cap H_t x \gamma
x^{-1} \neq \varnothing.$ This proves the first assertion. The second
assertion is an immediate consequence of the first. 
\end{proof}

Suppose $\Gamma$ is a Fuchsian group containing an elliptic element
$f$. The corresponding {\em cone} in $\HH/\Gamma$ is the image of
$B(z_f, R(f))$ under
the map $\varphi: \HH/\langle f \rangle \to \HH/\Gamma$, where $z_f
\in \HH$ is the fixed point of $f$ and 
$R(f)$ is defined via \equ{eq: defn R}. 

\begin{prop}
\name{prop: cones}
With the above notation, let $\gamma \in \Gamma$ such that $z_f \gamma$
is a closest point to $z_f$ in the orbit $z_f \Gamma$. Then the
commutator $h= [\gamma, f]$ is hyperbolic, its eigenvalue is bounded
above by a number depending only on $R(f)$, and the distance from the
axis of $h$ to $z_f$ is also bounded by a number
depending on $R(f)$.
\end{prop}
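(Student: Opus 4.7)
My plan is to compute $h = [\gamma, f]$ directly in normalized coordinates and to read both bounds off its explicit matrix form.

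First, observe that $\rho := d(z_f, z_f \cdot \gamma)$ equals $2R(f)$. Indeed, the map $B(z_f, r)/\langle f \rangle \to \HH/\Gamma$ fails to be injective at $r$ exactly when some $\Gamma$-orbit point of $z_f$ not in $z_f \langle f \rangle$ lies within hyperbolic distance $2r$ of $z_f$, and by hypothesis $\gamma$ realizes the infimum of such distances. Next, conjugating $\Gamma$ in $G$ — which affects neither $R(f)$ nor $\lambda(h)$ nor the axis-to-$z_f$ distance — I arrange that $z_f = \ii$ and $z_f \cdot \gamma = e^\rho \ii$. Writing $2\theta$ for the rotation angle of $f$ (so $\theta \in (0,\pi)$ since $f$ is elliptic and nontrivial), $f$ is represented in $\SL(2,\R)$ by $r_\theta$, and $\gamma f \gamma^{-1}$ — elliptic with the same rotation angle but fixed point $e^\rho \ii$ — is represented by $g_\rho^{-1} r_\theta g_\rho$ in the notation of \S2.1.

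A direct $2\times 2$ matrix computation then yields
\[
\mathrm{tr}(h) = 2 + 2\sin^2\theta \bigl(\cosh(2R(f)) - 1\bigr),
\]
which strictly exceeds $2$ because $\rho > 0$ and $\sin\theta > 0$. Hence $h$ is hyperbolic, and $\lambda(h) + \lambda(h)^{-1} = \mathrm{tr}(h) \leq 2\cosh(2R(f))$ gives the bound $\lambda(h) \leq e^{2R(f)}$, completing the first half of the statement.

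For the axis distance I invoke the standard displacement identity for a hyperbolic isometry,
\[
\sinh\bigl(\tfrac{1}{2} d(p, hp)\bigr) = \cosh\bigl(d(p, \mathrm{axis}(h))\bigr)\cdot\sinh\bigl(\tfrac{1}{2}\log\lambda(h)\bigr),
\]
applied at $p = z_f$. From the trace formula, rearranging gives $\sinh(\tfrac{1}{2}\log\lambda(h)) = \sin\theta\,\sinh(R(f))$; a parallel computation from the matrix entries of $h$ acting on $\ii$ gives $\sinh(\tfrac{1}{2} d(z_f, h z_f)) = \sin\theta\,\sinh(2R(f))$. Dividing and using $\sinh(2R) = 2\sinh(R)\cosh(R)$, the factors of $\sin\theta$ and $\sinh(R(f))$ cancel, leaving
\[
\cosh\bigl(d(z_f, \mathrm{axis}(h))\bigr) = 2\cosh(R(f)),
\]
which depends only on $R(f)$. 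The calculation is routine; the points to watch are the factor of two in the identification $\rho = 2R(f)$ and the cancellation of the $\theta$-dependence in the final ratio, which is what makes the axis-distance bound uniform across all possible orders of $f$.
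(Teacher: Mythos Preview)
Your proof is correct, and for the first claim (hyperbolicity and the eigenvalue bound) it proceeds essentially as the paper does: both normalize coordinates, compute $\mathrm{tr}(h) = 2 + 2\sin^2\theta\bigl(\cosh(2R(f)) - 1\bigr)$ directly from the matrix product, and bound it by $2\cosh(2R(f))$. The only difference is cosmetic --- the paper places $z_f$ and $z_f\gamma$ symmetrically at $\pm t + \ii$, while you put them on the imaginary axis.

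For the axis distance the two arguments diverge. The paper argues qualitatively: it bounds $d(z_f, z_f h)$ above by $4R(f)$ (since $h = f'f^{-1}$ with $f^{-1}$ fixing $z_f$ and $f'$ rotating about a point at distance $2R(f)$), then bounds the translation length of $h$ from \emph{below} using the order of $f$ to bound $\sin\theta$ away from zero, and concludes via a Saccheri-quadrilateral estimate. Your route through the displacement identity is sharper: the factors of $\sin\theta$ cancel exactly, giving the closed form $\cosh\bigl(d(z_f,\mathrm{axis}(h))\bigr) = 2\cosh R(f)$. This proves precisely what the proposition asserts --- a bound depending on $R(f)$ alone --- whereas the paper's argument tacitly imports the order of $f$ as an additional parameter. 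That is harmless for the intended application (in the proof of Corollary~\ref{cor: restriction on Gamma}(III) the order is bounded via Hurwitz), but your version matches the stated claim more faithfully and yields an explicit constant.
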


\begin{proof}
Write $z = z_f, z' = z \gamma$ and $f' = \gamma f
\gamma^{-1}$. Clearly  
$R(f) = d(z, z')/2.$
Conjugating $\Gamma$ with an appropriate element of $G$ and
relabelling we can arrange
that $z = t+\mathbf{i}$ and $z' = -t + \mathbf{i}$ are symmetric
with respect to the y-axis in $\HH$. Using the formula for hyperbolic
distance 
(see e.g. \cite[Thm. 1.2.6]{S.Katok}) we find $\cosh d(z,
z') = 1+ 2t^2$. 

On the other hand $f = h_t r_\theta h_{-t}$ and $f'
= h_{-t} r_\theta h_t$. Computing $h = f' f^{-1}$ directly
we find 
$\mathrm{tr}(h)=2 + 4t^2 \sin^2\theta.$
Since this number is greater than 2, $h$ is hyperbolic, and since this
number is no more than $2 \cosh d(z, z') = 2 \cosh 2R(f),$ and the
trace determines the eigenvalue, the second assertion follows. 

Since $f^{-1}$ fixes $z$ and $f'$ rotates around $z'$, the distance
which $z$ is moved by $h=f'f^{-1}$ is bounded in terms of $R(f)$. 
On the other hand the order of $f$ bounds $\sin \theta$ from below and
and hence implies a lower bound on the displacement of $h$. Now let
$p_1, p_2$ be the points on the axis of $h$ closest to $z$ and $zh$
respectively, so that $p_2 = p_1h$; consider the quadrilateral joining
$z, p_1, p_2, zh$. We have bounded $d(p_1, p_2)$ from
below and $d(z, zh)$ from above, and the angles at $p_1, p_2$ are
right angles. 
From this it is easy to deduce an upper bound on the
length $d(p_1, z)$. 
\end{proof}

\section{Finiteness of small cusps}
In this section we prove a more precise version of Theorem
\ref{thm: cusp areas finite}. The idea is that when $\Gamma_M$ is
non-elementary, a cusp in $\HH/\Gamma_M$ gives rise to two transverse
cylinder decompositions on $M$, one an image of the other under an
affine automorphism, and the
cusp area bounds the combinatorics of the corresponding intersection
pattern. We will first consider translation surfaces, so denote
$$\NLC_1(m,T) = \left\{(M,P) \in \NLC(m,T): M \mathrm{\ is \ a \
translation \ surface}\right \},$$
and by $\til \NLC_1(m,T)$ the corresponding sets of
$G$-orbits.

For any $T>0$ and $m \in \N$, let $\mathcal{N}(m,T)$ denote the set of
pairs $(A,D)$, where $A, D \in \Mat_{m}(\Z_+), \, A $ is symmetric,
$D$ is diagonal, $DA$ is non-negative irreducible and 
$\lambda(DA)<T.$ The following is an immediate corollary of
Proposition \ref{prop: PF finiteness}.

\begin{prop}
\name{prop: PF pairs}
For any $m \in \N$ and $T>0$, $\# \mathcal{N}(m,T) < \infty.$ 
\end{prop}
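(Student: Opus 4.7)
The plan is to reduce the finiteness of $\mathcal{N}(m,T)$ to the finiteness of the set of irreducible non-negative integer matrices $B$ with $\lambda(B)<T$, which is guaranteed by Proposition \ref{prop: PF finiteness}.

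First I would observe that the map $(A,D)\mapsto B\df DA$ sends $\mathcal{N}(m,T)$ into the set
\[
\mathcal{B}(m,T)\df \{B\in \Mat_m(\Z):B\text{ is irreducible non-negative and }\lambda(B)<T\},
\]
which by Proposition \ref{prop: PF finiteness} is finite. So it is enough to show that each $B\in\mathcal{B}(m,T)$ has only finitely many preimages under this map.

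Next I would analyze the fibers. Write $D=\diag(d_1,\dots,d_m)$. Since $DA$ is irreducible, no row of $DA$ vanishes, so every $d_i$ must be a strictly positive integer. From $B_{ij}=d_iA_{ij}$ and $A_{ij}\in\Z_+$, we have $d_i\mid B_{ij}$ for all $j$, and moreover, because $B$ is irreducible and hence has no zero row, there exists at least one index $j=j(i)$ with $B_{ij}>0$. Consequently
\[
1\le d_i\le B_{i,j(i)},
\]
so each $d_i$ has only finitely many possible values. Once $D$ has been chosen, the matrix $A=D^{-1}B$ is completely determined (and one need only check that it is symmetric with non-negative integer entries to decide whether the pair lies in $\mathcal{N}(m,T)$). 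Therefore the fiber of $(A,D)\mapsto DA$ over each $B$ is finite, and combining this with the finiteness of $\mathcal{B}(m,T)$ gives $\#\mathcal{N}(m,T)<\infty$.

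There is no real obstacle here — the only thing to be careful about is ruling out $d_i=0$, which would have put us outside the hypothesis of irreducibility. Apart from that, the argument is just a bounded-factorization observation layered on top of Proposition \ref{prop: PF finiteness}.
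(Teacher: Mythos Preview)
Your argument is correct and is exactly the fleshing-out of what the paper has in mind: the paper simply states that the proposition is ``an immediate corollary of Proposition \ref{prop: PF finiteness}'' and gives no further details, and your map $(A,D)\mapsto DA$ together with the divisibility bound on the diagonal entries is precisely how one makes that immediacy explicit.
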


Given a symmetric $A = \left( a_{ij} \right) \in \Mat_{m}(\Z_+)$, let
$\ell =\sum_{i,j} a_{ij}$. 
Denote by
$\mathcal{P}(A)$ the set of simultaneous conjugacy classes
$[(\sigma_1, \sigma_2)]$ for which (i) and (ii) of \S2.2 hold, and such
that 
$\sigma_1$ and $\sigma_2$ are conjugate in $S_{\ell}.$

\ignore{
\begin{remark}
It would be interesting to know $\#
\mathcal{N}(m,T)$ and $\# \mathcal{P}(A)$ explicitly, or understand their asymptotics.
\end{remark}
}

\begin{thm}
\name{thm: NLC precise}
For fixed $T>0$ and $m \in \N$, 
$$\# \til \NLC_1(m, T) \leq
\sum_{(A, D) \in \mathcal{N}(m, C_m T)} \# \mathcal{P}(A).$$ 
\end{thm}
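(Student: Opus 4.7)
The plan is to define an injective map from $\til \NLC_1(m, T)$ into the set $\bigsqcup_{(A,D) \in \mathcal{N}(m, C_m T)} \mathcal{P}(A)$, from which the stated bound follows by summation. Given $(M,P) \in \NLC_1(m,T)$ with $t_0 = t_0(\Gamma_M, P) \le T$, Proposition \ref{prop: cusp} lets us act by $G$ so that $P = \langle h_1 \rangle$ and $\til h_{-t_0^2} \in \Gamma_M$. Moreover, the proof of that proposition constructs $\til h_{-t_0^2}$ as a $\Gamma_M$-conjugate of $h_1$, so there is $\gamma \in \Gamma_M$ whose corresponding affine automorphism $\Phi \in \Aff(M)$ carries the horizontal cylinder decomposition (from $h_1$) onto the vertical one (from $\til h_{-t_0^2}$). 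In particular both decompositions consist of $m$ cylinders; I label the horizontal cylinders $C^{(1)}_1, \ldots, C^{(1)}_m$ and set $C^{(2)}_i = \Phi(C^{(1)}_i)$, giving a matched labelling.

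Next I extract the combinatorial invariants. Let $a_{ij}$ be the number of parallelogram components of $C^{(1)}_i \cap C^{(2)}_j$; the $\Phi$-matched labelling combined with the $\Phi$-action on parallelograms renders $A = (a_{ij})$ symmetric. Applying Proposition \ref{prop: simple automorphisms} to $h_1$ and $\til h_{-t_0^2}$ produces cylinder-preserving powers $\psi_h = \varphi_h^k$, $\psi_v = \varphi_v^k$ (with a common exponent $k \le C_m$ since $\Phi \psi_h \Phi^{-1} = \psi_v$ forces $k_h = k_v$), and a common vector of Dehn twist numbers $(n_i)$ (equal on $C^{(1)}_i$ and $C^{(2)}_i$ by the same $\Phi$-equivariance). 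Let $D = \diag(n_1, \ldots, n_m)$. Finally, labelling the $\ell = \sum_{i,j} a_{ij}$ parallelograms and recording the right- and top-neighbour permutations as in \S2.2 defines $(\sigma_1, \sigma_2) \in S_\ell \times S_\ell$; conditions (i), (ii) there follow from the cylinder structure and connectedness of $M$, and the fact that $\sigma_1, \sigma_2$ are conjugate in $S_\ell$ is realized by the bijection of parallelograms induced by $\Phi$. So $[(\sigma_1, \sigma_2)] \in \mathcal{P}(A)$.

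For the spectral bound, let $w_i$ be the horizontal circumference of $C^{(1)}_i$ and $w'_j$ the vertical circumference of $C^{(2)}_j$. Every parallelogram in $C^{(1)}_i \cap C^{(2)}_j$ has horizontal width equal to the horizontal extent $h'_j$ of $C^{(2)}_j$ and vertical height equal to the height $h_i$ of $C^{(1)}_i$, so $w_i = \sum_j a_{ij} h'_j$ and $w'_j = \sum_i a_{ji} h_i$. The twist relations read $h_i = n_i w_i / k$ and $h'_j = n_j w'_j / (k t_0^2)$; substituting and using the symmetry of $A$ yields
\[
(DA)^2 \, w = k^2 t_0^2 \, w.
\]
Since $w$ is a positive eigenvector of $DA$, Proposition \ref{prop: Perron Frobenius} (applied together with connectedness of $M$ to get irreducibility) identifies $\lambda(DA) = k t_0 \le C_m T$, placing $(A, D)$ in $\mathcal{N}(m, C_m T)$.

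Injectivity is by reconstruction: given $(A, D, [(\sigma_1, \sigma_2)])$ and the normalization $h_1, \til h_{-t_0^2} \in \Gamma_M$, the vector $w$ is determined up to scaling as the Perron--Frobenius eigenvector of $DA$, the scaling is fixed by normalizing the area, and this pins down all parallelogram dimensions; the gluing pattern then reassembles $M$ uniquely up to affine equivalence. The main obstacle is verifying the symmetry of $A$ and the $S_\ell$-conjugacy of $\sigma_1, \sigma_2$: both demand that the $\Phi$-induced labelling produces a genuine combinatorial symmetry, which requires a careful analysis of the affine map $\Phi$ produced by Proposition \ref{prop: cusp} and its action on parallelograms. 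A secondary technical point is the identity $(DA)^2 w = k^2 t_0^2 w$, together with the accompanying equality $k_h = k_v$, which relies on the $\Phi$-equivariant relationship between $\psi_h$ and $\psi_v$.
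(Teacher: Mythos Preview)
Your approach is essentially the paper's: both construct the same map $(M,P) \mapsto (A,D,[(\sigma_1,\sigma_2)])$ by using Proposition~\ref{prop: cusp} to produce a second parabolic conjugate to the first, then reading off the cylinder-intersection matrix, the Dehn-twist diagonal, and the gluing pattern. The one substantive simplification you miss is the paper's extra conjugation by the diagonal matrix $a=\diag(t_0^{-1/2},t_0^{1/2})$: this replaces your pair $h_1,\ \til h_{-t_0^2}$ by the symmetric pair $h_{t_0},\ \til h_{-t_0}$, which forces the conjugating element $\gamma$ to have the form $h_{s_1} w \til h_{s_2}$, hence to preserve cylinder heights and widths. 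From $\vec a=\vec a'$ and $\vec w=\vec w'$ the paper gets the \emph{linear} relation $kt_0\,\vec a = DA\,\vec a$ directly, and the reconstruction of parallelogram dimensions (hence of $M$) is immediate. Your route through the squared equation works, but note a small slip: your relations give $(AD)^2 w = k^2 t_0^2 w$, not $(DA)^2 w$; the conclusion $\lambda(DA)=kt_0$ still follows since $DA$ and $AD$ are similar, but it is $Dw$, not $w$, that is the Perron--Frobenius eigenvector of $DA$. Correspondingly, your reconstruction step ``pins down all parallelogram dimensions'' is only true up to a diagonal element of $G$ in your normalization (you do not recover $k$ and $t_0$ separately, only $kt_0$); this is harmless since you only need the affine equivalence class, but the paper's normalization makes that ambiguity disappear.
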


\begin{proof}
We will construct a map 
$$\Psi: \NLC_1(m, T) \to X,$$
where 
$$X=\left\{ \left (A,D, [(\sigma_1, \sigma_2)] \right): (A,D) \in
\mathcal{N}(m, C_mT), \, [(\sigma_1, \sigma_2)] \in \mathcal{P}(A) \right\},
$$
such that $\Psi (M,P) = \Psi(M', P')$ if and only if there is $g \in
G$ such that $M' = gM$ and $P' = gPg^{-1}$; in other words, $\Psi$
induces an injective map 
defined on $\til \NLC_1(m, T).$ 

Let $(M', P') \in \NLC_1(m, T),$ that is $M'$ is a flat surface,
$\Gamma_{M'}$ is non-elementary, and $P' \subset \Gamma_{M'}$ is maximal
parabolic. By a conjugation we may assume that $P'$ is the cyclic
group generated by
$h_1$. Let $s, \gamma_0, h_1, t_0$ be as in
Proposition \ref{prop: cusp}, let $a$ be as \equ{eq: defn of a}, let $g= a^{-1}
h_{-s}$, and let $M=gM'$. Then $\Gamma_M = g\Gamma_{M'}g^{-1}$ contains
the two elements 
\eq{eq: gammai}{
\gamma_1 = h_{t_0}, \ \ \ \ \ \ \ \gamma_2=\til h_{-t_0}, 
}
and there is $\gamma \in \Gamma_M$
such that 
\eq{eq: prop of gamma}{
\gamma_2 = \gamma^{-1} \gamma_1 \gamma.
}
Moreover $P = gP'g^{-1}$ is generated by $\gamma_1$. 

By Proposition \ref{prop: simple automorphisms},
there is a parabolic affine automorphism $\varphi_1 \in \Aff(M)$ and
$k \leq C_m$ such that $D \varphi_1 =
\gamma_1^{k}= h_{k t_0},$ and $\varphi_1$ preserves the cylinders in a
horizontal cylinder 
decomposition $M= C_1 \cup \cdots \cup C_{m}$. 
Let $\psi \in \Aff(M)$ such that $\gamma = D \psi$, and let $\varphi_2
= \psi^{-1} \varphi_1 \psi \in \Aff(M)$, so that 
$D \varphi_2 =\gamma_2^{k}.$
Let $C'_i = \psi^{-1}(C_i)$. Then $M= C'_1 \cup \cdots \cup C'_m$ is a
vertical cylinder decomposition invariant under $\varphi_2$. 

 Let $\vec{a}, \vec{w}$ 
(resp. $\vec{a}', \vec{w}'$) be positive vectors in $\R^m$ recording
the heights and circumferences of the cylinders $C_1, \ldots, C_m$
(resp. $C'_1, \ldots, C'_m$). It follows from \equ{eq: gammai} and \equ{eq: prop of gamma}
that $\gamma = h_{s_1} w \til h_{s_2}$ for some $s_1, s_2 \in \R$, where
$w$ is as in \equ{eq: defn of a}. Since the action of $h_s$
(resp. $\til h _s$) does not affect the heights and circumferences of
horizontal (resp. vertical) cylinders, and since $w$ affects a
rotation by $\pi/2$, we get
$$\vec{a} = \vec{a}', \ \ \ \vec{w}=\vec{w}'.
$$

Now let $A = (a_{ij}) \in \Mat_{m} (\Z)$ where $a_{ij}$ is
the number of connected components of $C_i \cap C'_j$. By
connectedness of $M$, $A$ is irreducible. Since $\psi$
exchanges the roles of $C_i$ and $C'_j$, $A$ is symmetric. By
construction, the gluing pattern $[(\sigma_1, \sigma_2)]$
corresponding to the two cylinder decompositions above is in
$\mathcal{P}(A).$ 
We have
\eq{eq: w}{
\vec{w}= \vec{w}' = A \vec{a}.
}

Now let $n_i$ be the number of Dehn twists induced by $\varphi_1$
around a waist curve in $C_i$. By Proposition \ref{prop: simple automorphisms} we have 
$ k t_0 = n_i w_i/a_i,$
that is, if $D = \diag(n_1, \ldots, n_m) \in \Mat_{m}(\N)$,
then
$$k t_0 \vec{a} = D \vec{w} = D A \vec{a}.$$
That is, $k t_0 = \lambda(DA)$, $(D,A) \in \mathcal{N}(k T)$ and $k
\leq C_m.$
Altogether we have $(D,A, [(\sigma_1, \sigma_2)]) = \Psi(M', P) \in X,$
so to prove the theorem it remains to show that the $G$-orbit of $(M',
P')$
may be reconstructed from $(D, A, [(\sigma_1, \sigma_2)]).$ 
Note that $\vec{a}$ is uniquely determined up to
scaling as the
positive eigenvector of $DA$, and $\vec{w}$ is determined from
$\vec{a}$ and $A$ via \equ{eq: w}. The scaling parameter is also
uniquely determined by the requirement that $M$ has area one. So
$(A,D)$ determine $\vec{a}$ and $\vec{w}$, that is, the heights and
circumferences of the rectangles, and $[(\sigma_1, \sigma_2)]$
determines how they are to be glued to each other. This determines
$M$, and $P$ is the maximal parabolic subgroup of $\Gamma_M$ leaving
the horizontal direction fixed.
\end{proof}

\begin{proof}[Proof of Theorem \ref{thm: cusp areas finite}.]
By Proposition \ref{prop: double cover}, for each $(M,P) \in
\NLC(m,T)$, the orientation double cover $\til M$ also has $P$ as a
parabolic subgroup of $\Gamma_{\til M}$, and the corresponding
cylinder decomposition has at most $2m$ cylinders. Thus, if $\til P
\subset \Gamma_{\til M}$ is a maximal parabolic subgroup containing
$P$, then $(M,P)$ gives rise to $(\til M, \til P) \in \NLC_1(2m,
T)$. Moreover for a fixed 
$(\til M, \til P)$ there are only finitely many $(M, P)$ from which it
arises in this way. The Theorem follows. 
\end{proof}

\begin{proof}[Proof of Corollary \ref{cor: covolumes finite}.]
In a given stratum there is an upper bound on the number of cylinders
in a cylinder decomposition. See \cite[Lemma, pg. 302]{KMS} or
Smillie's improved bound as presented in \cite{yoav}. 
Suppose by contradiction that $M_1, M_2, \ldots$ are infinitely
many affinely inequivalent surfaces, such that for each $i$,
$\HH/\Gamma_i$ does not contain an embedded ball of radius $R$. Here
$\Gamma_i = \Gamma_{M_i}$. By
Theorem \ref{cor: new} the $M_i$ are lattice 
surfaces, hence for each $i$, $\Gamma_i$ contains at least one
maximal parabolic $P_i$. By Theorem \ref{thm: cusp areas finite},
$t_0(\Gamma_i, P_i) \to \infty.$ A cusp with cusp area $t_0$ contains an
embedded ball of radius $R(t_0)$, where $R(t_0) \to \infty$ as $t_0
\to \infty$ --  a contradiction proving (i). 
For a
lattice surface, the cusp area is bounded 
above by the covolume, implying (ii). 
\end{proof}

\begin{remark}
It is also possible to deduce Corollary \ref{cor: covolumes finite}
 and the finiteness of $\til
\SC(m, T) \cap \MM$ for any stratum $\MM$, from 
Proposition \ref{prop: Thurston bound}, as follows. To any cusp one
associates the element $h=\gamma_1 \gamma_2$ as in \equ{eq: gammai} and
proves it is hyperbolic, with $\lambda(h)$ bounded by a number
depending on the corresponding cusp area. Our Theorem \ref{thm: cusp
areas finite} is
stronger in that it does not assume a bound on the topology of the
surface, but only on the number of cylinders in the corresponding
cylinder decomposition. 

\end{remark}

\section{Hyperbolic affine automorphisms and Markov partitions}
In this section we prove Proposition
\ref{prop: Thurston bound}. As noted  in the introduction, the
result is not new and presumably our argument is also
well-known. We include it since it does not appear in the literature. 

Let $M$ be a flat surface and let $\varphi$ be a hyperbolic affine
automorphism (in a different terminology, $\varphi$ represents a {\em pseudo-Anosov
homeomorphism}). 
Let $h = D\varphi$ be the corresponding element of $\Gamma_M$, and
let $\lambda=\lambda(h)$. By applying $g \in G$, let us assume that $h$
expands the x-axis and contracts the y-axis by a factor of
$\lambda$. A
{\em Markov partition} for $\varphi$ is a covering $M = P_1 \cup
\cdots \cup P_p$ by closed rectangles, with disjoint interiors,
horizontal and vertical sides, such that the following {\em Markov
property} is satisfied: for each $i$,
the image of any vertical (resp. horizontal) side of any $P_i$ under
$\varphi$ (resp. $\varphi^{-1}$) is contained in a vertical
(resp. horizontal) side of one of the $P_j$. This means that if
$\interior \, \varphi(P_i)$ intersects the interior of some $P_j$ then
it extends all the 
way through to both sides. 
It is known that a Markov partition exists. A sketch of proof is given
in \cite{travaux}, and we have provided more details in an appendix.

The {\em intersection matrix of $\varphi$} is the $p \times p$ integer
matrix $A = (a_{ij})$, where $a_{ij}$
is the number of connected components of $\interior \, \varphi(P_i) \cap
\interior \, P_j$. One has:

\begin{prop}
\name{prop: Markov, A}
\begin{itemize}
\item
$A$ is irreducible.
\item
$\lambda = \lambda(A)$. 
\item 
The vector recording the widths (resp. heights) of the rectangle
$P_i$ is a positive eigenvector for $A$ (resp. $A^{\mathrm{tr}}$). 
\end{itemize}
\end{prop}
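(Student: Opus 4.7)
The plan is to derive the two eigenvector identities
\[
A \vec{w} = \lambda \vec{w}, \qquad A^{\mathrm{tr}} \vec{h} = \lambda \vec{h},
\]
where $\vec{w}=(w_i)$ and $\vec{h}=(h_i)$ record the widths and heights of the $P_i$. Since both vectors are manifestly positive, this establishes the third bullet, and — combined with irreducibility — it will yield the second via Perron-Frobenius (Proposition \ref{prop: Perron Frobenius}). Irreducibility itself will come from topological mixing of $\varphi$.

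The geometric content of the two Markov conditions is the following. Each component of $\varphi(P_i) \cap P_j$ with nonempty interior is a rectangle spanning $P_j$ fully in the horizontal direction: its vertical boundary lies on the vertical skeleton of the partition (either as an image of a vertical side of $P_i$ under the first Markov condition, or as part of $\partial P_j$), and the only vertical segments of that skeleton inside $\overline{P_j}$ are the two vertical sides of $P_j$ itself, so the component has horizontal width exactly $w_j$. Dually, each component of $\varphi^{-1}(P_j) \cap P_i$ is a rectangle spanning $P_i$ fully in the vertical direction, of height $h_i$. Since $\varphi$ carries such an ``s-rectangle'' of width $w$ and height $h_i$ to a ``u-rectangle'' of width $\lambda w$ and height $h_i/\lambda$, matching widths to the known value $w_j$ forces every s-rectangle in $\varphi^{-1}(P_j) \cap P_i$ to have width $w_j/\lambda$, and every u-rectangle in $\varphi(P_i) \cap P_j$ to have height $h_i/\lambda$.

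The two identities now come from summing. The $a_{ij}$ s-rectangles in $\varphi^{-1}(P_j) \cap P_i$, taken over all $j$, partition $P_i$ into vertical strips of total width $w_i$, giving $w_i = \sum_j a_{ij} w_j / \lambda$, that is, $A \vec{w} = \lambda \vec{w}$. Dually the u-rectangles of $\varphi(P_i) \cap P_j$ over all $i$ tile $P_j$ horizontally, giving $h_j = \sum_i a_{ij} h_i / \lambda$, that is, $A^{\mathrm{tr}} \vec{h} = \lambda \vec{h}$.

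For irreducibility I would appeal to the classical fact that any pseudo-Anosov homeomorphism is topologically mixing. Since the $\{P_i\}$ form a Markov partition also for every $\varphi^k$, a standard induction gives that $(A^k)_{ij}$ equals the number of interior-nonempty components of $\varphi^k(P_i) \cap P_j$; applying mixing to each of the finitely many pairs $(\interior P_i, \interior P_j)$ and taking the maximum of the resulting indices gives a single $k$ for which $A^k$ is entrywise positive. Proposition \ref{prop: Perron Frobenius} then forces $\lambda = \lambda(A)$. I expect the main subtlety to be the clean derivation of the ``extends all the way through to both sides'' clause from the two Markov conditions — this is what rules out a component of $\varphi(P_i) \cap P_j$ terminating at a vertical side of $\varphi(P_i)$ lying strictly inside $P_j$, and it is the crucial consequence of the pair of Markov conditions working together.
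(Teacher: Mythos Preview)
Your argument is correct and, for the eigenvector identities, essentially identical to the paper's: the paper phrases it as ``$\varphi(P_i)$ has width $\lambda w_i$ and passes $a_{ij}$ times through $P_j$'', which is your u-rectangle count read the other way.

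The one genuine difference is in irreducibility. You invoke topological mixing of pseudo-Anosov maps as a black box. The paper instead gives a short self-contained argument: since $\varphi^{-1}$ is affine and contracts horizontal segments, there are no horizontal saddle connections, so the horizontal line flow is minimal; hence the images $\varphi^{n_k}(\sigma)$ of any horizontal segment $\sigma \subset P_i$ grow without bound and (via a limiting infinite ray) eventually meet every $\interior\,P_j$. This avoids citing mixing and uses only the elementary fact ``no saddle connections in direction $\theta$ $\Rightarrow$ minimality in direction $\theta$''. Your route is perfectly valid but imports a stronger external result; the paper's route keeps everything internal to the flat-surface setup already established. Also note that the ``extends all the way through'' clause you flag as the main subtlety is already recorded in the paper immediately after the definition of Markov partition, so you may simply use it.
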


\begin{proof}
It follows from the Markov property that the $i,j$th entry of $A^n$
counts the number of components in the 
intersection $\varphi^{n}(P_i) \cap P_j$. Suppose by contradiction
that $A$ is not irreducible, so that there is 
a sequence $n_k \to \infty$ and $1 \leq i,j \leq p$ so that
\eq{eq: to contradict1}{
\varphi^{n_k}(P_i) \cap P_j = \varnothing.
}
Let $\sigma$ be a
horizontal segment in $P_i$ and consider its image $\sigma_k$ under
$\varphi^{n_k}$. Then $\sigma_k$ is a horizontal segment contained in
$\varphi^{n_k} (P_i)$ whose length tends to infinity. Passing to a
subsequence we can 
assume the left endpoints of $\sigma_k$ converge to $x \in M$, so that
any point along an infinite horizontal ray $\ell$ issuing from $x$ is a limit
of points of $\sigma_k$. 
Since $\varphi^{-1}$ is an affine automorphism on $M$
which contracts horizontal saddle segments, there are no horizontal
saddle connections on $M$, so the horizontal line flow on $M$ is
minimal. This implies that $\ell$ is dense in $M$. In particular for 
large enough $k$, $\sigma_k \cap \interior \, P_j \neq \varnothing$,
contradicting \equ{eq: to contradict1}.

Now let $w_i$ be the width of $P_i$, so that $\lambda w_i$ is the
width of $\varphi(P_i).$ By the Markov property $\varphi(P_i)$ passes $a_{ij}$ times
through $P_j$, so that 
$\lambda w_i = \sum_j a_{ij} w_j$. That is, 
$$\lambda \vec{w} = A \vec{w}.$$
Since $\vec{w}$ is a positive vector, $\lambda = \lambda(A)$. 
The proof for heights is almost identical. 
\end{proof}

For a non-negative irreducible $A \in \Mat_{p}(\Z)$, let
$\mathcal{G}(A)$ denote the gluing patterns (as in \S2.2.2) of Markov partitions 
arising from hyperbolic affine automorphisms whose intersection matrix
is $A$. We have:

\begin{prop}
\name{prop: bound on number of gluing patterns}
$\mathcal{G}(A)$ is finite. 

\end{prop}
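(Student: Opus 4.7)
The plan is to reduce to the observation made at the end of Section 2.2.2 that only finitely many gluing patterns arise once both the number of parallelograms and the number of singularities are fixed. Since $A \in \Mat_p(\Z)$ already determines $p$, it suffices to bound the number and orders of the singularities of $M$ in terms of $p$.

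The key step will be an angle-counting argument. Each rectangle $P_i$ contributes exactly four corners, giving $4p$ corner incidences in the partition. Since the interior of each rectangle and of each horizontal or vertical edge is locally flat, every singularity $\sigma \in \Sigma_M$ must lie at a vertex of the partition, indeed at a corner of some $P_i$; the paper already builds this into the definition in Section 2.2.2 by refusing to place singularities in the interior of a segment. At a $k_\sigma$-pronged singularity the cone angle is $k_\sigma \pi$, so exactly $2k_\sigma$ quarter-angle corner incidences are absorbed there. Summing yields
\[
\sum_{\sigma \in \Sigma_M} 2 k_\sigma \;\leq\; 4p,
\]
which bounds both $\#\Sigma_M$ and the order vector $\vec k = (k_\sigma)$ in terms of $p$ alone. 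In particular only finitely many strata can contain $M$.

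With $p$ and $\vec k$ now confined to a finite set, I would apply the finiteness noted in Section 2.2.2: the vertices of the gluing pattern are either rectangle corners or singularities, both bounded by $4p$; the numbers of $\xi$- and $\eta$-segments are in turn bounded by the number of vertices (each segment has two endpoints, each vertex has bounded valence from the $4p$ corner budget); and the cyclic order, orientation, and side-labelling data at each vertex take values in a finite set. For half-translation surfaces one can either invoke the quarter-tile variant of the gluing pattern alluded to in Section 2.2.2, or pass to the orientation double cover via Proposition \ref{prop: double cover}: a Markov partition on $M$ with $p$ parallelograms lifts to one on $\til M$ with at most $2p$ parallelograms, and only finitely many $M$ correspond to a given $\til M$.

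The main obstacle I anticipate is not deep but combinatorial bookkeeping: one must verify that the valence of each vertex in the graph is genuinely bounded (so that the cyclic-order data is finite), which requires controlling not just singularities but also T-type vertices where the corner of one rectangle meets the interior of an opposite side of another. These are not singularities of $M$, so the angle-counting argument above does not restrict them directly; however, each such T-vertex still consumes at least one of the $4p$ corner incidences, so their number is bounded as well. Once this is in hand, the rest is a straightforward finite enumeration.
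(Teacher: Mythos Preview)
Your overall strategy matches the paper's: bound the total cone angle (hence $|\Sigma_M|$) in terms of $p$, then invoke the finiteness of gluing patterns from \S2.2.2. The gap is in the angle-counting step.

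You assert that every singularity lies ``at a corner of some $P_i$'', justifying this by the subdivision convention of \S2.2.2. But that convention only places singularities at \emph{vertices of the gluing graph}; the paper explicitly says each such vertex is ``either a corner of a rectangle or a singularity of $M$'', so a singularity need \emph{not} sit at a rectangle corner. Concretely, a cone point of angle $k_\sigma\pi$ can be in the interior of a side of every adjacent rectangle, with $k_\sigma$ rectangles each contributing a half-plane of angle $\pi$ and consuming no corner at all. In that situation your inequality $\sum_\sigma 2k_\sigma \leq 4p$ simply fails to follow from the corner budget: such a singularity uses zero of your $4p$ corner incidences. (Marked points with $k_\sigma = 2$ are the most obvious case, but higher-order cone points can also avoid all corners.)

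The missing ingredient is dynamical, and it is exactly what the paper supplies: because $\varphi$ (resp.\ $\varphi^{-1}$) preserves the rectangle boundaries while strictly contracting vertical (resp.\ horizontal) segments, $M$ carries no horizontal or vertical saddle connections. Hence each of the $4p$ rectangle sides contains at most one singularity, and each rectangle contributes at most $4\pi$ to the total cone angle, giving $\sum_\sigma k_\sigma \leq 4p$. With this bound in hand, the rest of your argument (bounding vertices, edges, and the combinatorial labelling data, then appealing to \S2.2.2) goes through and is essentially what the paper does. Your remarks on T-vertices and on passing to the orientation double cover are fine but not needed once the saddle-connection observation is made.
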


\begin{proof}
Since $\varphi$ (resp. $\varphi^{-1}$) preserves the boundaries of
rectangles, but contracts 
vertical (horizontal) saddle connections, there are there are no
vertical or horizontal saddle connections for $M$. Thus each rectangle
contributes at most $4\pi$ to the total angle around the singularities
of $M$. In other words the number of rectangles bounds the number of
singularities. Since each edge in a gluing pattern is either bounded
by a singularity or by a full size of a rectangle, this also bounds
the number of edges in a gluing pattern. Thus the number of gluing
patterns is finite. 
\end{proof}

We now want to show that intersection matrix of $\varphi$ determines
the metric data for the gluing pattern. 
Let $\xi_i$ be the horizontal segments of the gluing
pattern corresponding to $P_1, \ldots, P_p$. We will
refine our Markov partition to obtain another partition with the Markov property. Each
$\xi_i$ is a connected component of the intersection of two of the
$P_j$, say $P_1, P_2$. Consider the vertical segments 
(if any) which issue from the 
endpoints of $\xi_i$ into the interior of $P_1, P_2$. Any such
segment divides a rectangle vertically into two rectangles, with the same height
as before. At least one of the new rectangles will have width
$\xi_i$. We denote the resulting rectangle decomposition by 
$M = Q_1 \cup \cdots \cup Q_r$, and claim that this decomposition also has the Markov
property. To see this, note first that subdividing rectangles by vertical lines
does not affect the requirement for $\varphi^{-1}$, which involves
only horizontal sides. Now suppose $Q \subset P_1, P_2$ are as in
Figure 3,
$Q'
\subset P'$ such that $\interior \, \varphi(Q) \cap \interior \, Q'
\neq \varnothing$.  There are two cases to consider:
\begin{enumerate}
\item If $\varphi(P_1)$
does not reach the top of $P'$ then also $\interior \, \varphi(P_2)
\cap \interior \, P' \neq \varnothing$ and by the Markov property for
the $P_i$, both $\varphi(P_1)$ and $\varphi(P_2)$ go across $P'$ so
$\varphi(Q)$ goes across $P'$ and hence across $Q'$. 
\item
If $\varphi(P_1)$ reaches the top of
$P'$ and $P''$ lies above $P'$ then $\varphi(P_1)$ goes across $P'$
and $\varphi(P_2)$ goes across $P''$ so that $\varphi(Q)$ goes across
the smaller of the two, i.e., $\varphi(Q)$ goes across $Q'$.
\end{enumerate}

\begin{figure}[htp] \name{figure: Markov}
\input{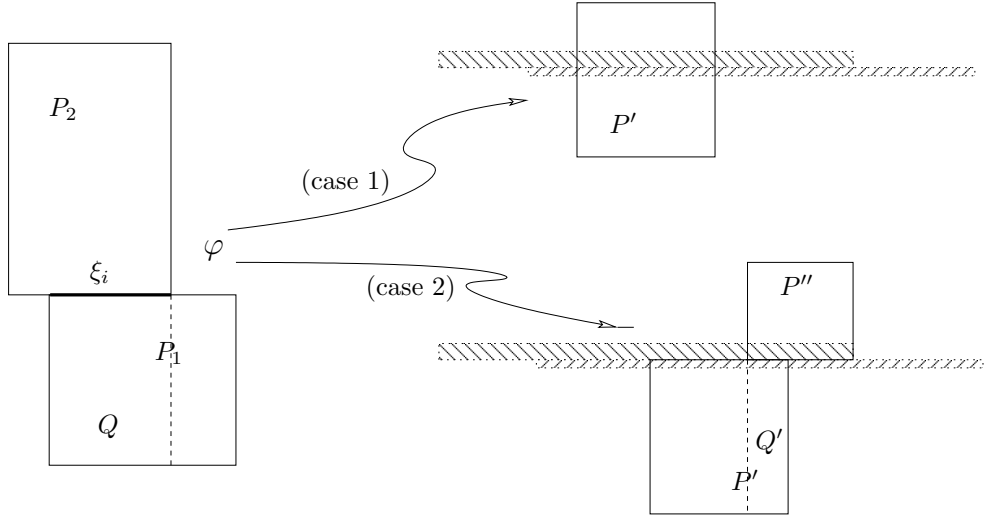}
\caption{Markov property for the $Q_j$}
\end{figure}

Considering all cases in this manner proves the claim. 

We can also refine using the $\eta_i$ instead of the $\xi_i$, cutting
rectangles horizontally, to obtain another Markov partition $T_1,
\ldots, T_t$. We define matrices $B$ and $C$ by letting $b_{ij}$
(resp. $c_{ij}$) be the number of connected components of
$\interior \, \varphi(Q_i) \cap \interior \, Q_j$ (resp. $\interior \,
\varphi(T_i) \cap \interior \, T_j$).
Repeating the proof of Proposition \ref{prop: Markov, A} we obtain:
\begin{prop}
\name{prop: Markov, B}
\begin{itemize}
\item
$B$ and $C$ are irreducible.
\item
$\lambda = \lambda(B) = \lambda(C)$. 
\item 
The vector recording the lengths of the segments $\xi_i$
(resp. $\eta_i$) is a positive eigenvector for $B$ (resp. $C$). 
\end{itemize}
\end{prop}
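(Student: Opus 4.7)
The plan is to apply Proposition \ref{prop: Markov, A} directly to the refined partitions $\{Q_j\}$ and $\{T_j\}$. The Markov property for both refinements has just been established (by the case analysis surrounding Figure 3), and the proof of Proposition \ref{prop: Markov, A} was essentially formal: it used only the Markov property together with the minimality of the horizontal straightline flow on $M$. The latter is intrinsic to $M$ itself, arising from the absence of horizontal saddle connections (guaranteed by the fact that $\varphi^{-1}$ is affine and contracts horizontal segments), and so is equally available for the refined partitions. Consequently $B$ and $C$ are irreducible with Perron--Frobenius eigenvalue $\lambda$, and the vectors of widths of the $Q_j$ and heights of the $T_j$ are positive eigenvectors of $B$ and $C$ respectively.

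What remains is to identify these width/height vectors with the lengths $(|\xi_i|)$ and $(|\eta_i|)$. For the $Q_j$: since the refinement was produced by cutting vertically at every endpoint of every $\xi_i$, no horizontal side of any $Q_j$ contains an interior endpoint of any $\xi_i$, so each horizontal side of a $Q_j$ is a single complete segment $\xi_i$. I would then define a map from the index set of the $Q_j$'s to that of the $\xi_i$'s by sending $Q_j$ to its bottom side, and verify that this is a bijection by showing that each $\xi_i$ is the bottom of exactly one $Q_j$, namely the unique sub-rectangle of the $P_k$ lying directly above $\xi_i$ that is bounded below by $\xi_i$. Under this bijection the width of $Q_j$ equals $|\xi_i|$, so the width-eigenvector is $(|\xi_i|)$ after reindexing. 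The case of $C$ and $(|\eta_i|)$ is obtained from the same argument rotated by $\pi/2$.

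The main step requiring care is verifying this bijection between $\{Q_j\}$ and $\{\xi_i\}$ and checking that the resulting reindexing indeed puts the matrix $B$ into the asserted form. I do not anticipate serious obstacles here: it is mostly a careful bookkeeping exercise based on the explicit subdivision procedure described before the statement, together with the observation that $M$ is closed (so every horizontal side of every $Q_j$ is shared with exactly one other $Q_{j'}$, giving the counting identity $r = \ell_1$).
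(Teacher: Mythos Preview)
Your core approach---apply the argument of Proposition \ref{prop: Markov, A} verbatim to the refined Markov partitions $\{Q_j\}$ and $\{T_j\}$---is exactly what the paper does; its entire proof is the single sentence ``Repeating the proof of Proposition \ref{prop: Markov, A}.'' This immediately gives irreducibility of $B$ and $C$, the eigenvalue identity $\lambda(B) = \lambda(C) = \lambda$, and that the vector of \emph{widths of the $Q_j$} (resp.\ heights of the $T_j$) is the Perron--Frobenius eigenvector.

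Where your write-up goes beyond the paper is in the attempted bijection between the $Q_j$ and the $\xi_i$, and here there is a genuine gap. Your implication ``no horizontal side of any $Q_j$ contains an interior endpoint of any $\xi_i$, so each horizontal side of a $Q_j$ is a single complete $\xi_i$'' is invalid: the premise also allows the side to be a \emph{proper sub-segment} of some $\xi_i$. And this actually happens. The vertical cuts through a given $P_k$ come from the $\xi$-endpoints on \emph{both} its top and bottom edges, and these need not align; a cut descending from an endpoint on the top edge will typically land in the interior of some $\xi_i$ on the bottom edge, so the bottom side of the resulting $Q_j$ is only part of that $\xi_i$. A count makes this visible: with $t_k$ (resp.\ $b_k$) the number of $\xi$'s on the top (resp.\ bottom) of $P_k$, the number of $Q$'s inside $P_k$ is $t_k+b_k-1$ when no cuts coincide, giving $r = 2\ell_1 - p$ in total, which exceeds $\ell_1$ as soon as some $P_k$ has more than one $\xi$ on an edge. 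So the bijection you propose, and the identity $r = \ell_1$, generally fail.

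Read literally, the paper's third bullet is imprecise for the same reason (a vector with $\ell_1$ entries cannot be an eigenvector of an $r\times r$ matrix when $r\neq\ell_1$). What is actually true, and what is used in Theorem \ref{thm: MP with bound}, is that the width vector of the $Q_j$ is the positive eigenvector of $B$, and the gluing pattern then determines each $|\xi_i|$ as a specified sum of consecutive $Q_j$-widths along the bottom edge of the appropriate $P_k$. This is the bookkeeping you anticipate, but it runs through a surjection with combinatorially determined fibres rather than a bijection.
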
 

We are now in a position to formulate and prove a more precise version
of Proposition \ref{prop: Thurston bound}. We will consider separately
translation and half-translation surfaces, so we write
$$\NSMP_1(p,T) = \{(M, h) \in \NSMP(p,T): M \mathrm{ \ is \ a \
translation \ surface} \},$$
and denote the corresponding set of $G$-orbits by $\til \NSMP_1(p,T).$
Let $\mathcal{M}(p,T)$ be the set of positive integer matrices $A$
with $\lambda(A)<T$. This is a finite set by
Proposition \ref{prop: PF finiteness}. Then we have:

\begin{thm}\name{thm: MP with bound}
For a fixed $T>0$ and $p \in \N$, 
$$\# \, \til \NSMP_1(p,T) \leq \sum_{A \in \mathcal{M}(p, T)} \# \, \mathcal{G}(A).$$
\end{thm}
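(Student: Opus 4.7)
The plan is to mirror the proof of Theorem \ref{thm: NLC precise}: I will construct a map
$$\Psi: \NSMP_1(p, T) \longrightarrow X, \qquad X = \left\{(A, \gamma) : A \in \mathcal{M}(p,T), \, \gamma \in \mathcal{G}(A)\right\},$$
with the property that $\Psi(M,h) = \Psi(M', h')$ forces $(M,h)$ and $(M',h')$ to lie in the same $G$-orbit. The stated bound $\# \til \NSMP_1(p,T) \leq \sum_{A \in \mathcal{M}(p,T)} \#\mathcal{G}(A) = \# X$ will then be immediate.

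To define $\Psi$, given $(M,h) \in \NSMP_1(p,T)$ I would first normalize by $G$-action so that $h = \diag(\lambda^{1/2}, \lambda^{-1/2})$, where $\lambda = \lambda(h)$; since $h$ is hyperbolic this is possible, and the normalization is unique up to the centralizer of $h$ in $G$, namely $\{g_s : s \in \R\}$. Let $\varphi \in \Aff(M)$ with $D\varphi = h$, and pick any Markov partition $P_1, \ldots, P_p$ realizing the minimum number $p = p(M,h)$ of rectangles. Set $\Psi(M,h) = (A, \gamma)$, where $A$ is the intersection matrix and $\gamma$ the gluing pattern. By Proposition \ref{prop: Markov, A}, $A$ is an irreducible non-negative integer $p \times p$ matrix with $\lambda(A) = \lambda < T$, so $A \in \mathcal{M}(p,T)$, and $\gamma \in \mathcal{G}(A)$ by the very definition of the latter.

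The core of the argument is a reconstruction statement: $(A, \gamma)$ determines $(M,h)$ up to $G$-equivalence. By Proposition \ref{prop: Markov, A}, the widths of the rectangles form the positive eigenvector of $A$, unique up to positive scaling, and the heights form the positive eigenvector of $A^{\mathrm{tr}}$. Scaling widths by $a$ and heights by $a^{-1}$ preserves total area, so after normalizing $M$ to have area $1$ the metric data are determined up to the one-parameter family of scalings which is precisely the orbit of the centralizer $\{g_s\}$. Given widths, heights, and the gluing pattern $\gamma$, the recipe of \S2.2.2 reassembles $M$ as a translation surface, and $h = \diag(\lambda(A)^{1/2}, \lambda(A)^{-1/2})$ is read off from $A$ alone. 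Together with the initial $G$-normalization, this shows that the $G$-orbit of $(M,h)$ is recovered from $(A,\gamma)$.

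The main obstacle I anticipate is making precise the assertion that the gluing pattern plus the (scaled) widths and heights of the $p$ rectangles truly reassemble a translation surface --- one must verify that the induced length constraints on the $\xi$- and $\eta$-edges are simultaneously consistent, which is guaranteed by the fact that $\gamma \in \mathcal{G}(A)$ arose from an honest Markov partition in the first place. A secondary, cosmetic point is that the definition of $\Psi$ involves a choice of minimal Markov partition when several exist, but this only gives $(M,h)$ multiple images, which is harmless for an upper bound. Once these verifications are in place, $\Psi$ descends to an injection $\til \NSMP_1(p,T) \hookrightarrow X$, giving the theorem.
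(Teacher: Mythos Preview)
Your overall strategy matches the paper's exactly: build a map from $\NSMP_1(p,T)$ to the set of pairs $(A,\gamma)$ and show it separates $G$-orbits. The gap is precisely at the point you flag as ``the main obstacle,'' but your proposed resolution is not correct. You argue that the $\xi$- and $\eta$-edge lengths are fixed because $\gamma$ ``arose from an honest Markov partition,'' i.e.\ because a consistent choice \emph{exists}. What is needed, however, is \emph{uniqueness}: you must rule out two affinely inequivalent pairs $(M,h)$, $(M',h')$ sharing the same $A$ and the same combinatorial gluing pattern but with different edge lengths. Nothing in \S2.2.2 gives this --- that section explicitly lists the edge lengths as part of the metric data, and only asserts the converse implication (edge lengths $\Rightarrow$ rectangle sidelengths).

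The paper supplies the missing uniqueness by a substantive extra step carried out just before the theorem: it refines the Markov partition by cutting along the verticals through the endpoints of the $\xi_i$ (respectively the horizontals through the endpoints of the $\eta_j$), checks that the refined decompositions again have the Markov property, and forms new intersection matrices $B$ and $C$. By Proposition~\ref{prop: Markov, B} the vector of $\xi$-lengths (respectively $\eta$-lengths) is the Perron--Frobenius eigenvector of $B$ (respectively $C$), hence determined up to a single scalar, and that scalar is pinned down by the already-known rectangle widths and heights. Since $B$ and $C$ are computable from $A$ together with the gluing pattern, this closes the reconstruction. Your write-up needs this argument (or an equivalent) to go through.
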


\begin{proof}
Given $(M,h) \in \NSMP_1(p,T)$, we have constructed $A \in
\mathcal{M}(p,T)$ and a gluing pattern in $\mathcal{G}(A)$, and these
data only depend on the affine equivalence class of $M$. Thus we have
defined a map 
$$\Phi: \til \NSMP_1(p, T) \to \cup_{A \in \mathcal{M}(p,T)} \mathcal{G}(A),
$$
and it remains to show that $\Phi$ is injective, i.e. that $M$ and $h$
are uniquely determined by the matrix $A$ and the gluing pattern, up
to an element of $G$.   
Applying an element $g \in G$ we can
assume that the parallelograms in the Markov partition corresponding
to $(M, h)$ are rectangles with the expanding
direction horizontal, and moreover the width of the widest
rectangle can be normalized to be 1. With this choice of $g$ we need to
show that the matrix and gluing pattern uniquely 
determine $gM$. Since the widths of the rectangles are in the unique
positive eigendirection of $A$, and by our scaling convention, the matrix $A$
determines the widths of the rectangles. The heights also span the
unique eigendirection for $A^{\mathrm{tr}}$ so are determined by $A$
up to scaling. By the requirement that $gM$ has area 1, the heights
are uniquely determined by $A$. From $A$ and the gluing pattern one
determines the matrices $B$ and $C$, and the lengths of the $\xi_i$
and $\eta_j$ give a positive eigenvector of $B$ and $C$ which is thus
determined up to scaling. Since the sidelengths of the rectangles
$P_i$ are determined, and can also be calculated using the lengths of
the $\xi_i$ and $\eta_i$, the $\xi_i$ and $\eta_j$ are also uniquely
determined. Since we have specified its gluing pattern and
geometry, the flat surface $gM$ is uniquely determined.
\end{proof}

\begin{proof}[Proof of Proposition \ref{prop: Thurston bound}]
By Proposition \ref{prop: double cover}, for each $(M, h) \in \NSMP(p,T)$
the orientation double cover $\til M$ has a hyperbolic affine
automorphism $\varphi$ such that the corresponding number of
parallelograms is at most $2p$. I.e., $(M,h) \in \NSMP(p,T)$ gives
rise to $(\til M, h) \in \NSMP_1(2p, T)$. Also for a fixed
$(\til M, h)$ the number of $(M, h)$ covered in this way is
finite. The Proposition follows. 
\end{proof}


\section{Restrictions on Veech groups}
\begin{proof}[
Proof of \ref{cor: restriction on Gamma}]
It is easily checked that (I) and (II) depend only on the
commensurability class of $\Gamma$, so to prove them we can assume
that $\Gamma = 
\Gamma_M$ for a flat surface $M$. 
Since the maximal number of cylinders for a cylinder decomposition on
$M$ is bounded, assertion (I) follows immediately from Theorem
\ref{thm: cusp areas finite}. Similarly assertion (II) is immediate
from Proposition \ref{prop: Thurston bound}. 

To prove assertion (III) we note that in a Veech group there is a uniform
upper bound on the order of an elliptic element by Hurwitz's theorem,
hence such a bound also exists in any group commensurable with a Veech
group. So it remains to show that a Fuchsian group
$\Gamma$ in which (II) holds and orders of elliptic elements are
uniformly bounded, (III) holds. To see this, given $T$ suppose we have
a list $z_1, z_2, \ldots$ of fixed points
for elliptics $f_1, f_2, \ldots$ in $\Gamma$, such that the corresponding cone
areas are no more than $T$. Since the order of the $f_i$ is bounded above, we have
a uniform upper bound for $R(f_i)$ defined via \equ{eq: defn R}. Let
$h_i$ be the hyperbolic element corresponding to $f_i$ via Proposition
\ref{prop: cones}. Since the eigenvalue of the $h_i$ is bounded, by
(II) the list $h_1, \ldots, $ contains only finitely many conjugacy
classes. Conjugating the $f_i$'s and correspondingly the $h_i$'s we
may assume that there are only finitely many distinct elements in the
list $h_1, h_2, \ldots$. Given $h=h_i$ in this list, let $A$ be its
axis, and let $A_0 \subset A$ be a compact fundamental domain for the
action of $h$ on $A$. By 
a further conjugation of $f_i$ assume that the closest point to $z_i$
on $A$ is in $A_0$. In view of the upper bound on the order of the
$f_i$'s, by Proposition \ref{prop: cones} $z_i$ is within bounded
distance of $A_0$. By discreteness of $\Gamma$ there
are only finitely many $z_i$'s and hence finitely many $f_i$'s. 
\end{proof}

\begin{remark}
The argument proving (III) can be adapted to prove the following
analogue of Theorem \ref{thm: cusp areas finite} and Proposition
\ref{prop: Thurston bound}: for any stratum $\MM$ and any $T$, the set
of affine equivalence classes of pairs $(M, f)$, where $M \in \MM$ and
$f \in \Gamma_M$ is elliptic, with corresponding cone area at most
$T$, is finite. 

\end{remark}

\ignore{
\begin{proof}[Proof of Corollary \ref{cor: markov partitions in
stratum}]
It follows from the construction of Markov partitions (see Proposition
\ref{prop: Markov partitions exist}) that for any stratum 
$\MM$ there is $m_0$ such that any $\varphi$ on a 
surface $M \in \MM$ has a Markov partition for which the number of
parallelograms is at most $m_0$.
Thus the
finiteness of $\til \Hyp (\MM, T)$ follows from Proposition \ref{prop:
Thurston bound}. 
\end{proof}
}
For the proof of Theorem \ref{cor: normalizer} we will need the
following Lemma. 

\begin{lem}
\name{lem: normal subgroup}
Suppose $\Lambda_0$ is a Fuchsian group, $\Gamma$ a normal subgroup of
$\Lambda_0$,
$\gamma \in \Gamma$ a non-central element such that for any $\lambda
\in \Lambda_0$ there is $\tau \in \Gamma$ with 
$\gamma^{\lambda} = \gamma^{\tau}.$ 
Then $\Gamma$ is of finite index in $\Lambda_0$. 
\end{lem}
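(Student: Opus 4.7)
The strategy is to rewrite the hypothesis as $\Lambda_0 = Z\Gamma$, where $Z$ is the centralizer of $\gamma$ in $\Lambda_0$, and then exploit the fact that centralizers of non-central elements in $G$ are essentially one-dimensional. First I would observe that $\gamma^\lambda = \gamma^\tau$ is equivalent to $(\lambda\tau^{-1})^{-1} \gamma (\lambda\tau^{-1}) = \gamma$, i.e.\ to $\lambda\tau^{-1} \in Z := Z_{\Lambda_0}(\gamma)$. The hypothesis therefore says precisely that $\Lambda_0 = Z\Gamma$, and since $\Gamma$ is normal in $\Lambda_0$, the second isomorphism theorem gives
\[
\Lambda_0/\Gamma \;\cong\; Z/(Z \cap \Gamma).
\]
Thus it suffices to show that $[Z : Z \cap \Gamma] < \infty$.

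Next I would use that $\gamma$ is non-central in $G$. In both $G = \SL(2,\R)$ and $G = \PSL(2,\R)$ the centralizer $Z_G(\gamma)$ of a non-central element is abelian, being (in the $\SL$ case, up to $\{\pm I\}$) the one-parameter subgroup containing $\gamma$. Consequently $Z$ is a discrete abelian subgroup of a one-dimensional Lie group, and so is either finite or virtually infinite cyclic. Note that $\gamma \in Z \cap \Gamma$.

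The argument then concludes by a short case analysis on the type of $\gamma$. If $\gamma$ is elliptic, $Z_G(\gamma)$ is compact, so the discrete subgroup $Z$ is finite and $[Z : Z\cap\Gamma] \le |Z| < \infty$ trivially. If $\gamma$ is parabolic or hyperbolic, then $\gamma$ has infinite order, so $Z \cap \Gamma \supseteq \langle \gamma \rangle$ is infinite; any subgroup of a virtually infinite cyclic group containing an element of infinite order is of finite index, which finishes the proof. The only real content is the initial reformulation via centralizers; I expect no substantial obstacle beyond carefully justifying the virtually-cyclic structure of $Z$ in the parabolic/hyperbolic case.
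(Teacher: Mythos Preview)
Your proof is correct and follows essentially the same approach as the paper's. Both rewrite the hypothesis as $\Lambda_0 = Z\Gamma$ (the paper writes $\Lambda_0 = C\Gamma$) and then use $\Lambda_0/\Gamma \cong Z/(Z\cap\Gamma)$ together with the structure of centralizers in Fuchsian groups; the only difference is that the paper cites directly that such a centralizer is cyclic, while you do an equivalent case analysis by the type of $\gamma$ to conclude $Z$ is finite or virtually infinite cyclic.
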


\begin{proof}[Proof of Theorem \ref{cor: normalizer} (assuming Lemma
\ref{lem: normal subgroup})]
Let $\Gamma$ be a non-elementary Fuchsian group commensurable
to a Veech group, let $\Lambda$ be the normalizer of $\Gamma$ in $G$,
and suppose 
$\Lambda/\Gamma$ is infinite. Since $\Gamma$ is non-elementary it
contains a hyperbolic element $\gamma$ \cite[Thm. 2.4.4]{S.Katok}, and
$\Lambda$ is Fuchsian \cite[Thm. 2.3.8]{S.Katok}. 
Let 
$$\Lambda_0 = \{\lambda \in \Lambda: \exists \tau \in \Gamma \mathrm{\
s.t. \ } \gamma^{\lambda} = \gamma^{\tau}\}.$$ 
In the action of $\Lambda$ 
on conjugacy classes of $\Gamma$, $\Lambda_0$ is the stabilizer of the
conjugacy class of $\gamma$. In particular $\Lambda_0$ is a subgroup
of $\Lambda.$ 
According to Lemma \ref{lem: normal subgroup},
$\Lambda_0/\Gamma$ is finite so $\Lambda/\Lambda_0$ is
infinite. This implies that the conjugates
$\{\gamma^{\lambda} : \lambda \in \Lambda\}$ comprise infinitely many
conjugacy classes, in other words $\Gamma$ contains infinitely many
hyperbolic elements which are conjugate 
in $\Lambda$, hence have the 
same eigenvalue, but are not conjugate in $\Gamma$. This 
contradicts Corollary \ref{cor: restriction on Gamma}. 
\end{proof}

\begin{proof}[Proof of Lemma \ref{lem: normal subgroup}]
Let $C$ be the centralizer of $\gamma$ in $\Lambda_0$ and let $C_0 = C
\cap \Gamma$. Since $\Lambda_0$ is Fuchsian, $C$ is cyclic (see
\cite[\S2.3]{S.Katok}) and therefore $C_0$ is of finite index
in $C$. By assumption for
every $\lambda \in \Lambda_0$ there is $\tau \in 
\Gamma$ such that $\lambda^{-1} \tau \in C$, which implies that
$\Lambda_0 =C \Gamma$. 
Thus $C$ maps onto $\Lambda_0 / \Gamma$. Since this
surjection factors through $C/C_0$, $\Lambda_0/\Gamma$ must be
finite. 
\end{proof}

\begin{proof}[Proof of Corollary \ref{cor: group determines surface}]
Given $\MM$, for any $M \in \MM$ and any hyperbolic $h \in \Gamma_M$,
the minimal number $p(M,h)$ of parallelograms in 
a corresponding Markov partition can assume only finitely many values
by Proposition \ref{prop: Markov partitions exist}. Let
$\Gamma$ contain a hyperbolic element $h$. 
By Proposition \ref{prop:
Thurston bound}
the number of $G$-orbits of $M$ such that $h\in \Gamma_M$ is
finite. In particular the set in \equ{eq: M(Gamma)} intersects only
finitely many $G$-orbits. If $M_1$ and $M_2$ are flat
surfaces in $\MM$ such that $gM_1 = M_2$ and $\Gamma_{M_1}  =
\Gamma_{M_2}$ then $g \in N$; and if $\Gamma$ is non-elementary then
$N/\Gamma$ is finite by Corollary \ref{cor: normalizer}. 
\end{proof}

\begin{proof}[Proof of Theorem \ref{cor: new}]
Given $R$ let $T = 2R$. Enlarging $T$ we ensure that a cusp of area at
least $T$ contains an embedded ball of radius $R$. Enlarging $T$
further, in light of the upper bound on the order of elliptic elements
in $\Gamma$, we ensure that any cone of area at least $T$ contains an
embedded ball of radius $R$. Thus we may assume $\HH/\Gamma$ contain
neither a  
cusp of volume at least $T$, nor a cone of area at least $T$; in view
of Corollary \ref{cor: restriction on Gamma}, $\HH/\Gamma$ has only
finitely many cusps and cones, and only finitely many
closed geodesics of length less than $T$. 

Now let $\mathcal{N}_1 \subset \HH/\Gamma$ be the union of closed
geodesics of length less 
than $T$, cusps of area less than $T$, and cones of area less than
$T$. Then $\mathcal{N}_1$ has finite area. 
Let $\mathcal{N}$ be a neighborhood of $\mathcal{N}_1$ which is
large enough so that a closed loop intersecting the complement of $\mathcal{N}$
and homotopic to either a geodesic of length less than $T$, a loop
around a cusp of area less than $T$, or a loop around a cone point of
area less than $T$, must have length at least $2R$. Since $\HH/\Gamma$
has only finitely many cusps and cones, the area of $\mathcal{N}$ is
also finite, so $\HH/\Gamma \neq
\mathcal{N}$. 

Let $\pi: \HH \to \HH/\Gamma$ be the natural map, and let $\bar{z} \in
\HH$ so that $z = \pi(\bar{z}) \notin \mathcal{N}$. We claim that
$\pi|_{B(\bar{z}, 
R)}$ is injective. Otherwise there is a segment $\bar{\sigma}$
connecting two distinct points in $B(\bar{z}, R)$, mapping to
a closed loop $\sigma$ in $\HH/\Gamma$ of length less than
$2R$. Either $\sigma$ has a shortest representative which is a
geodesic of length less than $2R$, or $\sigma$ is freely homotopic to
a curve around a cusp or cone, that is a curve in $\mathcal{N}_1$. But
then the definition of $\mathcal{N}$ ensures that the length of
$\sigma$ is greater than $T$, and this is a contradiction. 
\end{proof}

\begin{appendix}
\section{The construction of Markov partitions}
\begin{prop}
\name{prop: Markov partitions exist}
For any stratum $\MM$ of flat surfaces, 
there are $R_1$ and $R_2$ such that 
for any surface $M \in 
\MM$ and any hyperbolic $\varphi \in \Aff(M)$:
\begin{itemize}
\item
 any Markov partition
for $\varphi$ has at least $R_1$ parallelograms; 
\item
there is a Markov
partition for $\varphi$ with at most $R_2$ parallelograms.  
\end{itemize}

\end{prop}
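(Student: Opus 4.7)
The lower bound is a topological counting argument. In any Markov partition $M = P_1 \cup \cdots \cup P_p$ with horizontal/vertical rectangle sides, the $2k_\sigma$ separatrix directions at each $k_\sigma$-prong singularity $\sigma$ must coincide with rectangle boundaries, so exactly $2k_\sigma$ rectangle corners meet at $\sigma$. Counting all rectangle corners gives
\[
4p \ \geq \ \sum_{\sigma \in \Sigma} 2 k_\sigma,
\]
so $p \geq \tfrac{1}{2} \sum_\sigma k_\sigma$. Since the $k_\sigma$ and $|\Sigma|$ are fixed in a stratum, this yields the uniform lower bound $R_1$.

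For the upper bound we follow the construction sketched in \cite{travaux}. Since $\varphi \in \Aff(M)$ is hyperbolic with eigenvalue $\lambda>1$, there are no horizontal or vertical saddle connections: such a $\delta$ would contract to length $\lambda^{-n} |\delta| \to 0$ under $\varphi^n$, contradicting discreteness of $\Sigma_M$. Hence both foliations are minimal on $M$. Enumerate the horizontal separatrices issuing from singularities as $\gamma_1, \ldots, \gamma_N$, where $N = \sum_\sigma 2k_\sigma$ depends only on the stratum. The plan is to build a finite embedded graph $\Theta \subset M$ containing $\Sigma$ whose edges are horizontal or vertical, in such a way that the components of $M \setminus \Theta$ are rectangles forming a Markov partition for $\varphi$.

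First I would fix a small $\epsilon > 0$, and for each $\gamma_j$ travel forward along $\gamma_j$ until the trajectory first comes within vertical distance $\epsilon$ of some singularity $\sigma'$; at that point cap $\gamma_j$ by a short vertical segment of length $\le \epsilon$ to $\sigma'$. Perform the symmetric procedure for vertical separatrices, using short horizontal caps. For $\epsilon$ small enough, the union $\Theta$ of these capped separatrices partitions $M$ into finitely many open rectangles with horizontal/vertical sides; the number of them is bounded in terms of $N$ and $\chi(M)$ via an Euler-characteristic count on the cell complex $\Theta$, hence uniformly in the stratum.

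The main obstacle is arranging that this decomposition actually has the Markov property, namely that $\varphi$ maps horizontal sides of rectangles into horizontal sides (and $\varphi^{-1}$ maps vertical into vertical). This is delicate because a given choice of truncation lengths need not be $\varphi$-compatible. The standard remedy is to start from an initial family of horizontal separatrix segments that contains all short horizontal caps and is closed under taking subsegments, and then replace it by $\bigcup_{n \ge 0} \varphi^n(\mathcal{F})$ (with the analogous procedure applied to vertical caps under $\varphi^{-n}$). A compactness argument shows this process stabilizes after finitely many steps—one uses that horizontal segments have no recurrence to $\Sigma$ combined with the $\lambda$-expansion of $\varphi$ on horizontals—and that the total number of horizontal and vertical edges of the resulting closed family is still bounded in terms of $N$, $\chi(M)$, and $\epsilon$ alone. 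The resulting $\Theta$ then yields a genuine Markov partition with number of parallelograms bounded uniformly in the stratum, giving $R_2$.
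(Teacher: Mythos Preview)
Your lower-bound angle count is fine in spirit, though the claim that exactly $2k_\sigma$ rectangle corners meet at each singularity assumes every singularity sits at a rectangle corner; nothing in the definition forces this (a singularity may lie in the interior of a side, contributing angle $\pi$ from that rectangle). The paper instead observes that, since there are no horizontal or vertical saddle connections, each of the four sides of a rectangle contains at most one singularity, so each rectangle contributes at most $4\pi$ to the total cone angle $k\pi = \pi\sum_\sigma k_\sigma$, giving $p \ge k/4$.

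The real gap is the upper bound. Your iteration $\bigcup_{n\ge 0}\varphi^n(\mathcal{F})$ is precisely the delicate step, and you have not justified it: since $\varphi$ expands horizontals by $\lambda$, the images $\varphi^n(\mathcal{F})$ contain ever-longer segments, and there is no reason this stabilizes, let alone with a rectangle count independent of $M$ (note your $\epsilon$ already depends on the metric of $M$, not just the stratum, so a bound ``in terms of $N$, $\chi(M)$, and $\epsilon$'' would not give a uniform $R_2$). The paper avoids this entirely via a key idea you are missing: first replace $\varphi$ by a bounded power so that it \emph{fixes every singularity and every critical leaf}. Then take a \emph{single} short horizontal segment $\gamma$ issuing from one singularity, and drop vertical segments $\lambda_j$ from every singularity (both directions) until they first hit $\gamma$, with one trimming/extension adjustment at the rightmost intersection. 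The complement consists of exactly $1+k/2$ rectangles, and the Markov property is now automatic with no iteration at all: since $\varphi^{-1}$ fixes the initial singularity and its horizontal leaf while contracting horizontals, $\varphi^{-1}(\gamma)\subset\gamma$; similarly $\varphi(\lambda_j)\subset\lambda_j$.
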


\begin{proof}
Let $k = \sum_{\sigma \in \Sigma} k_\sigma$ so that $k\pi$ is the
total angle around 
all the singularties of a surface in $\MM$. Suppose $M \in \MM$ and
$\varphi \in \Aff(M)$ is hyperbolic, with a Markov partition into
parallelograms $P_1, \ldots, P_p$. By conjugating we can assume the
$P_i$ are rectangles with horizontal and vertical sides. The
horizontal and vertical directions are contracted by either $\varphi$
or $\varphi^{-1}$, but these maps 
preserve the set of saddle connections, so there
are no horizontal or vertical saddle connections on $M$. In 
particular each
edge of each $P_i$ can only contain one singularity, so that the total
angle around singularities coming from each $P_i$ is at most
$4\pi$. This implies that $p \geq R_1=k/4$, proving the first assertion.  

To prove the second assertion, by Proposition \ref{prop: double cover}
it suffices to consider strata 
of translation surfaces. Also, if we have a Markov partition for
$\varphi^r$, the common refinement of its images under $\varphi^i$,
$i=0, \ldots, r$, 
provides a Markov partition for $\varphi$. Thus by passing to a
suitable finite power of $\varphi$, which can be taken to depend only
on $\MM$, we can assume that $\varphi$ fixes all the singularities and
all the critical leaves issuing from singularities. 
Assume
without loss of generality that the contracting direction for
$\varphi$ (resp. $\varphi^{-1}$) is vertical (resp. horizontal). 

Since there are
no horizontal or vertical saddle connections, the foliation in both
the horizontal and 
vertical direction is minimal, so the leaves intersect any transverse
segment. 
We will construct the edges of the partition in 3 steps. 
\begin{enumerate}
\item
Let $\gamma$ be a short closed segment starting at a singularity
and going along a horizontal leaf, say from left to right, and
let $\lambda$ be a union of closed segments starting at each singularity
along the vertical leaves (going both up and down) until the first
point of intersection with $\gamma$. Denote the segments comprising
$\lambda$ by $\lambda_1, \ldots, \lambda_k$.
\item
The intersection $\gamma \cap \lambda$ is finite, 
so there is a {\em terminal point} $p$ which is the right-most
intersection point 
of $\lambda \cap \gamma$. Remove
from $\gamma$ the subsegment to the right of $p$. We retain the name
$\gamma$ for the shorter segment.

\item
Suppose $\lambda_j$ ends at $p$. Continue it further until its next
intersection with $\gamma$. We retain the names $\lambda, \lambda_j$ for
this new collection. Since $M$ has no vertical saddle
connections, the new intersection point of $\lambda_j$ with $\gamma$
is not one of the previous ones, and no two of these new intersection
points coincide. 

\end{enumerate}

We first claim that each connected component of $M \sm (\gamma \cup
\lambda)$ is a rectangle. Take a segment $I \subset \gamma$ which is
bounded by either a singularity and the first point where a
$\lambda_\ell$ comes down to $\gamma$, or by two consecutive points at
which $\lambda_\ell$'s come down to $\gamma$. Consider the union of
vertical leaves which begin at points of $I$ and move upwards. These
form a strip, which near $\gamma$ is bounded on both sides by
segments in $\lambda$. By choice of the $\lambda_j$ all these leaves
all hit $\gamma$ again `at 
the same time', i.e. none of these leaves hit a singularity or a
terminal point before returning to $\gamma$. The construction also
ensures that the segments from $\lambda$ on both sides of the strips
also extend until the next intersection with $\gamma$. Thus the strip
is a rectangle bounded by $\gamma \cup \lambda$. 

We now claim that this partition into rectangles has the Markov
property. Since $\varphi^{-1}$ fixes each singularity and maps each critical
leaf to itself, and since it contracts the horizontal direction,
$\gamma$ is mapped into itself. This implies that lower boundaries 
of rectangles map into lower boundaries under
$\varphi^{-1}$. For identical reasons each $\lambda_j$ is mapped into
itself by $\varphi$. 

Now note that the endpoints of lower edges of rectangles
on $\gamma$ are the terminal point $p$, the singularity, and one
additional point for each downward pointing edge in $\lambda$. In
total we have 
$2+k/2$ endpoints which gives $1+k/2$ rectangles. In particular the
number of rectangles depends only on $\MM$.
\end{proof}

\end{appendix}


\begin{thebibliography}{99}




\bibitem[DoHu]{DH} A. Douady and J. Hubbard, {\em On the density of
Strebel differentials}, Inv. Math. {\bf 30} (1975) 175--179.


\bibitem[EsOk]{EO1} A. Eskin and A. Okounkov, {\em Asymptotics of
numbers of branched coverings of a torus and volumes of moduli spaces
of holomorphic differentials,} Inv. Math.
{\bf 145} (2001), no. 1, 59--103.




\bibitem[FLP]{travaux}
A.~Fathi, F.~Laudenbach, and V.~Poenaru, {\bf Travaux de {T}hurston sur les
  surfaces}, vol. 66-67, Asterisque, 1979. 

\bibitem[Ga]{Gantmacher}
 F. R. Gantmacher, {\bf Theory of matrices,
Vol. 2} Chelsea publishing, 1959.   










\bibitem[HuSc1]{HS2} P. Hubert and T. Schmidt, {\em Invariants of
translation surfaces}, Ann. Inst. Fourier {\bf 51}, 2 (2001) 461--495. 

\bibitem[HuSc2]{HS} P. Hubert and T. Schmidt, {\em Infinitely generated
Veech groups}, Duke Math. J.  {\bf 123}  (2004),  no. 1, 49--69.

\bibitem[HuSc3]{HSsurvey} P. Hubert and T. Schmidt, {\em An
introduction to Veech surfaces,} in {\bf Handbook of Dynamical
Systems, Vol. 1B}, B. Hasselblatt and A. Katok (eds.) 501--526
Elsevier (2006).

\bibitem[Ka]{S.Katok} S. Katok, {\bf Fuchsian groups}, Univ. of
Chicago Press (1992). 


\bibitem[KeMaSm]{KMS} S. Kerckhoff, H. Masur and J. Smillie, {\em
Ergodicity of billiard flows and quadratic differentials},
Ann. Math. {\bf 124} (1986) 293--311.







\bibitem[MaSm]{MS} H. Masur and J. Smillie, {\em Hausdorff dimension
of sets of nonergodic measured foliations}, Ann. Math. {\bf 134}
(1991) 455-543.


\bibitem[MaTa]{MT} H. Masur and S. Tabachnikov, {\em Rational
billiards and flat structures}, in {\bf Handbook of dynamical systems,
Enc. Math. Sci. Ser.} (2001).

\bibitem[Mc]{McM} C. McMullen, {\em Teichm\"uller geodesics of
infinite complexity}, Acta Math. {\bf 191} (2003) 191--223.


\bibitem[Na]{yoav} Y. Naveh, {\em Tight upper bounds on the number of
invariant components on translation surfaces}, preprint (2006). 

\bibitem[PaPe]{PP}A. Papadopoulos and R. C. Penner, {\em Enumerating
pseudo-Anosov foliations}, Pac. J. Math {\bf 142} (1990) 159--173.

\bibitem[Pe]{Penner} R. C. Penner, {\em Bounds on least dilatations}, 
Proc. Amer. Math. Soc. {\bf 113} (1991) 443--450.







\bibitem[Ry]{Rykken} E. Rykken,  {\em Expanding factors for
pseudo-Anosov homeomorphisms}, Michigan Math. J.  {\bf 46}  (1999),
no. 2, 281--296. 


\bibitem[SmWe1]{toronto} J. Smillie and B. Weiss, {\em Finiteness
results for flat surfaces: a survey and problem list},  (2006) to
appear in {\bf  Partially hyperbolic dynamics, laminations, and 
     Teichm\"uller flow} (Proceedings of a conference, Fields
Institute, Toronto  Jan 2006), G. Forni, M. Lyubich, and M. Shub (eds.)   
 

\bibitem[SmWe2]{triangles} J. Smillie and B. Weiss, {\em
Characterizations of lattice surfaces}, preprint
(2008). 


\bibitem[Th]{Thurston} W. Thurston, {\em On the geometry and dynamics
of diffeomorphisms of surfaces}, Bull. AMS (new series) {\bf 19} no. 2
(1988) 417--431.


\bibitem[Ve1]{Veech - geodesic flow} W. A. Veech,  {\em The Teichm\"uller geodesic
flow}, 
Ann. of Math. {\bf 124} (1986) 441--530.


\bibitem[Ve2]{Veech - alternative} W. A. Veech, {\em
Teichm\"uller curves in moduli space, Eisenstein series and an
application to triangular billiards}, 
Invent. Math. {\bf 97} (1989), no. 3, 553--583.


\bibitem[Vo]{Vorobets} Ya. B. Vorobets, {\em Planar structures and
billiards in rational polygons: the Veech alternative}, (Russian)
Uspekhi Mat. Nauk  {\bf 51}  (1996),  no. 5(311), 3--42;  translation in
Russian Math. Surveys  {\bf 51}  (1996),  no. 5, 779--817.

\bibitem[Zo]{zorich} A. Zorich, {\em Flat surfaces}, in {\bf
Frontiers in number theory, physics and geometry,} P. Cartier,
B. Julia, P. Moussa and P. Vanhove (eds), Springer (2006). 
\end{thebibliography}
\end{document}